\title{Kissing numbers of closed hyperbolic manifolds}
\author{Maxime Fortier Bourque}
\address{School of Mathematics and Statistics, University of Glasgow, University Place, Glasgow, United Kingdom, G12 8QQ}
\email{maxime.fortier-bourque@glasgow.ac.uk}
\author{Bram Petri}
\address{Mathematisches Institut, Unversit\"at Bonn, Endenicher Allee 60, 53115 Bonn, Germany}
\email{bpetri@math.uni-bonn.de}
\date{\today}
\numberwithin{equation}{section}
\newtheorem{thm}{Theorem}[section]
\newtheorem{prop}[thm]{Proposition}
\newtheorem{cor}[thm]{Corollary}
\newtheorem{lem}[thm]{Lemma}
\theoremstyle{definition}
\newtheorem{rem}[thm]{Remark}
\newcommand{\thmref}[1]{Theorem~\ref{#1}}
\newcommand{\propref}[1]{Proposition~\ref{#1}}
\newcommand{\secref}[1]{Section~\ref{#1}}
\newcommand{\lemref}[1]{Lemma~\ref{#1}}
\newcommand{\corref}[1]{Corollary~\ref{#1}}
\newcommand{\eqnref}[1]{Equation~\eqref{#1}}
\newcommand{\nc}{\newcommand}
\nc{\dmo}{\DeclareMathOperator}
\nc{\abs}[1]{\left| #1 \right|}
\nc{\bigO}[1]{O\left(#1\right)}
\nc{\card}[1]{\left|#1\right|}
\nc{\ceil}[1]{\left\lceil #1 \right\rceil}
\nc{\CC}{\mathbb{C}}
\nc{\dilog}{\mathcal{L}}
\nc{\floor}[1]{\left\lfloor #1 \right\rfloor}
\nc{\ind}{\mathds{1}}
\nc{\ZZ}{\mathbb{Z}}
\nc{\len}[1]{\left| #1 \right|}
\nc{\littleo}[1]{o\left(#1\right)}
\dmo{\Mat}{Mat}
\nc{\NN}{\mathbb{N}}
\nc{\norm}[1]{\left|\left| #1 \right|\right|}
\nc{\QQ}{\mathbb{Q}}
\nc{\RR}{\mathbb{R}}
\nc{\st}[2]{\left\{\, #1 \,:\, #2\,\right\}}
\dmo{\supp}{supp}
\nc{\tr}[1]{\mathrm{tr}\left(#1\right)}
\nc{\what}{\widehat}
\dmo{\im}{Im}
\nc{\eps}{\varepsilon}
\dmo{\li}{li}
\dmo{\area}{area}
\dmo{\conv}{conv}
\dmo{\diam}{diam}
\dmo{\DD}{\mathbb{D}}
\dmo{\dist}{\mathrm{d}}
\nc{\HH}{\mathbb{H}}
\dmo{\Isom}{Isom}
\dmo{\MCG}{MCG}
\dmo{\MPL}{MPL}
\dmo{\Mod}{\mathcal{M}}
\dmo{\PL}{PL}
\nc{\Sphere}{\mathbb{S}}
\dmo{\sys}{sys}
\dmo{\kiss}{Kiss}
\dmo{\Teich}{\mathcal{T}}
\nc{\Torus}{\mathbb{T}}
\dmo{\vol}{vol}
\dmo{\WP}{WP}
\dmo{\convTV}{\;\stackrel{\mathrm{TV}}{\longrightarrow}\;}
\nc{\ExV}[2]{\mathbb{E}_{#1}\left[#2\right]}
\dmo{\EE}{\mathbb{E}}
\nc{\Pro}[2]{\mathbb{P}_{#1}\left[#2\right]}
\dmo{\PP}{\mathbb{P}}
\nc{\distTV}[2]{\mathrm{d}_{\rm TV}\left(#1,#2\right)}
\dmo{\UU}{\mathbb{U}}
\nc{\Var}[2]{\mathbb{V}\mathrm{ar}_{#1}\left[#2\right]}
\dmo{\alt}{\mathfrak{A}}
\dmo{\Aut}{Aut}
\dmo{\Fix}{Fix}
\dmo{\GL}{GL}
\dmo{\Hom}{Hom}
\dmo{\id}{Id}
\dmo{\PSL}{PSL}
\dmo{\PO}{PO}
\dmo{\Rep}{Rep}
\dmo{\SL}{SL}
\dmo{\SO}{SO}
\dmo{\sym}{\mathfrak{S}}
\dmo{\inv}{\mathcal{I}}
\dmo{\orb}{\mathcal{O}}
\dmo{\stab}{Stab}
\dmo{\calA}{\mathcal{A}}
\dmo{\calB}{\mathcal{B}}
\dmo{\calC}{\mathcal{C}}
\dmo{\calD}{\mathcal{D}}
\dmo{\calE}{\mathcal{E}}
\dmo{\calF}{\mathcal{F}}
\dmo{\calG}{\mathcal{G}}
\dmo{\calH}{\mathcal{H}}
\dmo{\calI}{\mathcal{I}}
\dmo{\calJ}{\mathcal{J}}
\dmo{\calK}{\mathcal{K}}
\dmo{\calL}{\mathcal{L}}
\dmo{\calM}{\mathcal{M}}
\dmo{\calN}{\mathcal{N}}
\dmo{\calO}{\mathcal{O}}
\dmo{\calP}{\mathcal{P}}
\dmo{\calQ}{\mathcal{Q}}
\dmo{\calR}{\mathcal{R}}
\dmo{\calS}{\mathcal{S}}
\dmo{\calT}{\mathcal{T}}
\dmo{\calU}{\mathcal{U}}
\dmo{\calV}{\mathcal{V}}
\dmo{\calW}{\mathcal{W}}
\dmo{\calX}{\mathcal{X}}
\dmo{\calY}{\mathcal{Y}}
\dmo{\calZ}{\mathcal{Z}}
\begin{document}

\begin{abstract}
We prove an upper bound for the number of shortest closed geodesics in a closed hyperbolic manifold of any dimension in terms of its volume and systole, genera\-lizing a theorem of Parlier for surfaces. We also obtain bounds on the number of primitive closed geodesics with length in a given interval that are uniform for all closed hyperbolic manifolds with bounded geometry. The proofs rely on the Selberg trace formula.
\end{abstract}

\maketitle

\section{Introduction}

The \emph{kissing number} $\kiss(M)$ of a Riemannian manifold $M$ is the number of distinct free homotopy classes of non-trivial, oriented, closed geodesics in $M$ that realize its \emph{systole} --- the minimal length among all such geodesics. The question of how large this number can be has been studied by several authors for flat tori and hyperbolic surfaces.

\subsection*{Flat tori} If $M$ is an $n$-dimensional flat torus, then it is isometric to ${\RR}^n / \Lambda$ for some lattice $\Lambda \subset \RR^n$ and its kissing number is perhaps a more familiar quantity, obtained as follows. Start growing spheres of equal radius at all the points in $\Lambda$ until two of them become tangent. Then $\kiss(M)$ is equal to the number of spheres tangent to (or \emph{kissing}) any given sphere in the resulting packing. This is a much studied quantity (see \cite{PfenderZiegler}), yet lattices with ma\-xi\-mal kissing number are only known in dimensions $1$ to $9$ and $24$ \cite[p.22]{ConwaySloane}. The largest kissing number among lattices in $\RR^n$ was recently shown to grow exponentially in $n$ \cite{Vladut} (the upper bound was proved in \cite{KabLev}).

\subsection*{Hyperbolic surfaces}

 Among all complete hyperbolic metrics of finite area on an o\-rien\-table surface of genus $g$ with $p$ punctures, the metrics that maximize the kissing number in their respective moduli spaces are only known for $(g,p) =(0,4)$, $(1,1)$, $(1,2)$ and $(2,0)$ \cite{SchmutzKiss}. In large genus, the best known examples have kissing number growing faster than $g^{\frac{4}{3} - \eps}$ for every $\eps>0$ \cite{Schmutz}. Furthermore, the kissing number of hyperbolic surfaces of signature $(g,p)$ is bounded above by a sub-quadratic function of $g+p$  \cite{Par,FanPar}. 

\subsection*{Hyperbolic manifolds} 
Our main result bounds the kissing number of a closed hyperbolic manifold $M$ in terms of its volume $\vol(M)$ and systole $\sys(M)$, generalizing Parlier's inequality \cite{Par} to all dimensions.

\begin{thm} \label{thm:main}
For every $n\geq 2$, there exists a constant $A_{n}>0$ such that 
\begin{equation*}
\kiss(M) \leq A_{n} \vol(M) \,\frac{e^{(n-1) \sys(M)/2}}{\sys(M)}
\end{equation*}
for every closed hyperbolic $n$-manifold $M$.
\end{thm}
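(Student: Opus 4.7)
The natural framework is the Selberg trace formula for closed hyperbolic $n$-manifolds, whose geometric side consists of a volume term $\vol(M)\cdot V_n(h)$ plus a sum over oriented primitive closed geodesics
\begin{equation*}
\sum_{\gamma\text{ prim}}\sum_{k\geq 1}\frac{\ell(\gamma)}{D(\gamma,k)}\,g(k\ell(\gamma)),\qquad D(\gamma,k)=|\det(I-P_\gamma^k)|\,e^{(n-1)k\ell(\gamma)/2},
\end{equation*}
where $P_\gamma\in\SO(n-1)$ is the normal holonomy of $\gamma$, $g$ is an even compactly supported test function, and $h$ is its Selberg transform. Since $|\det(I-P_\gamma^k)|\leq 2^{n-1}$, the exponential factor $e^{(n-1)\sys(M)/2}$ in the statement arises precisely from $D(\gamma,1)$ at systolic geodesics.

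I would choose a smooth non-negative even bump $\phi$ supported in $[-a,a]$ (for a small dimensional constant $a>0$) and take $h(r)=4\cos^2(r\sys(M)/2)\,|\hat\phi(r)|^2\geq 0$ on the real spectrum. Its inverse transform $g$ is a sum of three translates of $\phi*\phi$ localized near $0$ and $\pm\sys(M)$. Provided $\sys(M)\geq 4a$, the three lumps of $\supp(g)$ are disjoint, higher iterates $k\ell$ with $k\geq 2$ are excluded, and only primitive geodesics of length in $[\sys(M)-2a,\sys(M)+2a]$ contribute. The systolic geodesics alone give a contribution of at least $c_n\,\kiss(M)\,\sys(M)\,e^{-(n-1)\sys(M)/2}$ to the geometric side.

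To control the spectral side I would exploit an auxiliary trace formula applied to $|\hat\phi|^2$: its inverse transform $\phi*\phi$ is supported in $[-2a,2a]$, which contains no primitive closed geodesic lengths, so the geodesic sum vanishes and one obtains the clean identity $\sum_j|\hat\phi(r_j)|^2 = \vol(M)\cdot C_{n,\phi}$. Since $|h(r)|\leq 4|\hat\phi(r)|^2$ on the tempered (real) spectrum, this controls the tempered part of $\sum_j h(r_j)$ by a dimensional constant times $\vol(M)$. The non-tempered part of the spectrum (small eigenvalues $\lambda_j<(n-1)^2/4$ with imaginary $r_j=is_j$) contributes the terms $4\cosh^2(s_j\sys(M)/2)|\hat\phi(is_j)|^2$; careful analysis of these, combined with the universal inequality $e^{(n-1)\sys(M)/2}\leq C_n\vol(M)$ coming from embedding an injective ball of radius $\sys(M)/2$ in $M$, should give a bound on the geometric side of the form $A'_n\vol(M)$ and hence the claim $\kiss(M)\leq A_n\vol(M)\,e^{(n-1)\sys(M)/2}/\sys(M)$.

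The main obstacle I anticipate is the non-tempered spectral contribution, which \emph{a priori} grows exponentially in $\sys(M)$ and is weighted by the number of small eigenvalues (which can be as large as a dimensional constant times $\vol(M)$); naive bounds fall short of the target by a factor of $\vol(M)$. Overcoming this likely requires either a test function $h$ designed to vanish or decay on the non-tempered spectrum (for example by inserting a factor that kills the trivial eigenvalue $\lambda_0=0$), a cancellation between non-tempered spectral contributions and the volume term $V_n(h)$, or an averaging argument exploiting the auxiliary identity more fully. The thin-systole regime $\sys(M)<4a$ is dispatched separately: by the Margulis lemma $\kiss(M)$ is bounded by a dimensional constant in this range, while $\vol(M)$ is universally bounded below, so the inequality is immediate.
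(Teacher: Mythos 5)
Your overall framework — Selberg trace formula, a bump $\phi$ supported in $[-a,a]$, translates by $\pm\sys(M)$ to pick up systolic geodesics, the Margulis/Buser estimate for thin systole, and the ball-volume inequality $e^{(n-1)\sys(M)/2}\lesssim\vol(M)$ — matches the paper's in spirit. But the obstacle you flag at the end is a genuine gap, and the paper's resolution is exactly the extra idea your construction lacks, so the proposal as written does not prove the theorem.

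The core issue is the sign of your geometric-side test function. You take $g$ to be a \emph{non-negative} sum of three bumps (at $0$ and $\pm\sys(M)$), corresponding to $h(r)=4\cos^2(r\sys(M)/2)\,|\hat\phi(r)|^2$. Then the geodesic sum picks up the systolic contribution with a plus sign, and you must bound the \emph{spectral} side from above. On the non-tempered spectrum $r=is$ with $s$ near $\nu:=(n-1)/2$, the factor $\cosh^2(s\sys(M)/2)\approx e^{(n-1)\sys(M)/2}$ is unavoidable, the number of such eigenvalues can be of order $\vol(M)$, and as you note this overshoots the target by a factor of $\vol(M)$. Your auxiliary identity $\sum_j|\hat\phi(r_j)|^2=\vol(M)\,C_{n,\phi}$ controls $\sum_j|\hat\phi(r_j)|^2$, not $\sum_j\cosh^2(s_j\sys(M)/2)|\hat\phi(is_j)|^2$, so it does not close the gap either. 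The three fixes you float (kill $\lambda_0$, cancel against the volume term, average) do not work as stated: other small eigenvalues can cluster near $0$, the volume term $\vol(M)\int h\Phi_n\,dr$ is a single $O(\vol(M))$ quantity with no $e^{(n-1)\sys(M)/2}$ factor to cancel, and the auxiliary identity already gives the best averaging available.

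The paper sidesteps the non-tempered spectrum entirely by reversing the sign of $g$ near $\sys(M)$. Writing $R=\sys(M)$, $\nu=(n-1)/2$, it uses
\[
G(x)=(1+e^{\nu\eps})g_\eps(x)+e^{\nu\eps}\,\frac{g_\eps(x-R+\eps)+g_\eps(x+R-\eps)}{2}-\frac{g_\eps(x-R)+g_\eps(x+R)}{2},
\]
whose transform is $H(\xi)=\bigl(1+e^{\nu\eps}+e^{\nu\eps}\cos((R-\eps)\xi)-\cos(R\xi)\bigr)h_\eps(\xi)$. For $x\ge R$ only the \emph{negative} bump $-g_\eps(x-R)/2$ survives, so $G\le 0$ on the whole length spectrum; and on the imaginary axis $i[-\nu,\nu]$ the weight $e^{\nu\eps}\cosh((R-\eps)t)$ dominates $\cosh(Rt)$, so $H\ge 0$ there as well as on $\RR$. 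With $G\le 0$ on the length spectrum and $H\ge 0$ on the whole spectrum, one simply drops $\sum_j H(r_j)\ge 0$ in the trace formula and bounds the (now negative) geodesic sum by the volume integral $\vol(M)\int_0^\infty H\,\Phi_n\,dr$, which is $O(\vol(M))$ uniformly in $R$. The kissing-number term extracted from the systolic geodesics then gives exactly the claimed inequality. In short, the missing idea is to spend the extra translate $g_\eps(x\pm(R-\eps))$ with weight $e^{\nu\eps}$ to enforce \emph{positivity of $H$ on the non-tempered strip} simultaneously with \emph{non-positivity of $G$ past $R$}, rather than trying to estimate the non-tempered spectral sum directly.
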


For manifolds with small systole, a stronger inequality of the form
\begin{equation*}
\kiss(M) \leq A_n' \vol(M) \sys(M)^{\lfloor\frac{n-2}{2}\rfloor / \lfloor\frac{n+1}{2}\rfloor} 
\end{equation*}
follows from estimates on the volume of Margulis tubes around short geodesics due to Keen \cite{Keen} in dimension $2$ and Buser \cite{BuserDim3} in higher dimensions. As such, our contribution is really to the case of manifolds whose systole is uniformly bounded from below. 

Combining \thmref{thm:main} with a standard volume bound for the systole of closed hyperbolic manifolds yields the following simpler inequality.

\begin{cor}  \label{cor:volume}
For every $n\geq 2$, there exists a constant $A_n''>0$ such that 
\begin{equation*} 
\kiss(M) \leq A_n'' \, \frac{\vol(M)^2}{\log(1+\vol(M))}
\end{equation*}
for every closed hyperbolic $n$-manifold $M$.
\end{cor}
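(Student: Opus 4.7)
The plan is to combine Theorem~\ref{thm:main} with the elementary packing inequality that comes from embedding a metric ball of radius $\sys(M)/2$ in $M$. Writing $V=\vol(M)$, $s=\sys(M)$, and denoting by $v_n(r)$ the volume of a hyperbolic $n$-ball of radius $r$, one has $V\geq v_n(s/2)\geq C_n(e^{(n-1)s/2}-1)$ for an explicit constant $C_n>0$. Equivalently, $s\leq s_{\max}(V):=\tfrac{2}{n-1}\log(1+V/C_n)$, and in particular $e^{(n-1)s/2}\leq 1+V/C_n$.

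The function $\phi(s):=e^{(n-1)s/2}/s$ appearing in Theorem~\ref{thm:main} decreases on $(0,\tfrac{2}{n-1}]$ and increases on $[\tfrac{2}{n-1},\infty)$, so it blows up as $s\to 0^+$ and \emph{cannot} be globally controlled by a multiple of $V/\log(1+V)$. I would therefore split the argument at the threshold $s_0:=2/(n-1)$.

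In the large-systole case $s\geq s_0$, the inequality $s\leq s_{\max}(V)$ forces $V\geq C_n(e-1)>0$, so $V$ and $\log(1+V)$ are bounded below. Monotonicity of $\phi$ on $[s_0,\infty)$ and a direct computation then give
\begin{equation*}
\phi(s)\;\leq\;\phi\!\left(s_{\max}(V)\right)\;=\;\frac{(n-1)(1+V/C_n)}{2\log(1+V/C_n)}\;\leq\;\frac{C\,V}{\log(1+V)},
\end{equation*}
for a constant $C=C(n)$; the final step is routine once $V$ is bounded away from $0$, since $\log(1+V)$ and $\log(1+V/C_n)$ are then comparable up to multiplicative constants. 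Plugging this into Theorem~\ref{thm:main} yields $\kiss(M)\leq A_nC\cdot V^2/\log(1+V)$.

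In the small-systole case $s<s_0$, Theorem~\ref{thm:main} is too weak on its own. I would invoke the stronger inequality $\kiss(M)\leq A_n'V\,s^{\lfloor(n-2)/2\rfloor/\lfloor(n+1)/2\rfloor}$ mentioned right after Theorem~\ref{thm:main} (due to Keen in dimension $2$ and Buser in higher dimensions). Since the exponent is nonnegative and $s$ is bounded above by $s_0$, this collapses to $\kiss(M)\leq C''V$, and the trivial inequality $\log(1+V)\leq V$ turns this into $\kiss(M)\leq C''V^2/\log(1+V)$. Setting $A_n'':=\max(A_nC,C'')$ then closes the argument. The only substantive point, and what I expect to be the main obstacle, is the small-systole regime: the function $\phi$ is pathological there, so Theorem~\ref{thm:main} cannot carry the argument alone, and one must import the Margulis-tube volume estimates to rule out too many short geodesics when the systole is tiny.
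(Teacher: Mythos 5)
Your proposal follows essentially the same path as the paper's proof: both split at the threshold $s_0 = 2/(n-1)$, both embed a ball of radius $\sys(M)/2$ to obtain $\sys(M) \leq \tfrac{2}{n-1}\log\bigl(1 + \vol(M)/C_n\bigr)$, both exploit the monotonicity of $\phi(s) = e^{(n-1)s/2}/s$ on $[s_0,\infty)$, and both appeal to a uniform lower bound on volume (Kazhdan--Margulis) to compare $\log(1+\vol(M)/C_n)$ with $\log(1+\vol(M))$. The large-systole half of your argument is correct in every detail.

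The one genuine gap is in the small-systole case. The inequality $\kiss(M) \leq A_n' \vol(M)\,\sys(M)^{\lfloor(n-2)/2\rfloor/\lfloor(n+1)/2\rfloor}$ is \emph{not} established for all $s < s_0$: as made precise in \secref{sec:thin}, it holds only when $\sys(M) \leq 2\sinh^{-1}(1)$ in dimension $2$, and only when $\sys(M) \leq 4^{-(n+2)}$ in dimension $n \geq 3$. For $n = 3$, for instance, that leaves the substantial interval $[4^{-5},\,1]$ unaccounted for by your argument, since the Margulis tubes around geodesics of such lengths need not be disjoint. Your assertion that ``\thmref{thm:main} is too weak on its own'' in the small-systole regime is only true near $0$; on the intermediate range $[\eps_n, s_0]$ the function $\phi$ is continuous, hence bounded on a compact interval, and \thmref{thm:main} already gives $\kiss(M) \leq A_n \vol(M)\,\max_{[\eps_n,s_0]}\phi$, i.e.\ a constant multiple of $\vol(M)$. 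The paper handles the small-systole regime by combining \thmref{thm:main} \emph{and} the Section~2 estimate; your proposal needs the same combination rather than attributing the whole regime to the Margulis-tube inequality. With that repair the argument closes.
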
 

In dimension 2, we recover Parlier's bounds
\[
\kiss(M) \leq U\, \frac{e^{\sys(M) / 2}}{\sys(M)} g \leq  V \, \frac{g^2}{\log g}
\]
with a very different proof and a smaller constant $U\approx 63.71$ (compared to $200$ previously\footnote{In \cite{SchmutzKiss}, \cite{Par} and \cite{FanPar}, the kissing number is defined as the number of shortest unoriented geodesics. We count oriented geodesics instead because that agrees with the usual convention in the Euclidean setting and is well adapted to our proof. We therefore multiplied Parlier's 100 by 2.}), where $g$ is the genus of the closed oriented hyperbolic surface $M$. 

\subsection*{Comments on the proof}

The proof of \thmref{thm:main} relies on the Selberg trace formula, which links the spectrum of the Laplace operator on a hyperbolic manifold to its length spectrum via pairs of functions that are Fourier transforms of one another. The idea of the proof is to look for a function that picks up the bottom part of the length spectrum and whose Fourier transform does not take negative values on the Laplace spectrum. This strategy was inspired by a similar approach for bounding the density of sphere packings \cite{CohnElkies} which was recently used to prove the optimality of the $E_8$ and Leech lattices in dimensions $8$ and $24$ \cite{Viazovska,CKMRV}. In the Euclidean setting, the role of the Selberg trace formula is played by the Poisson summation formula.

\subsection*{Uniform length spectrum bounds}

Given a closed hyperbolic $n$-manifold $M$, we denote the set of primitive, oriented, closed geodesics in $M$ by $\calP(M)$ and the subset whose lengths lie in an interval $[a,b]$ by $\calP_{[a,b]}(M)$. The prime geodesic theorem \cite{Huber,Gangolli,DeGeorge} states that the cardinality of $\calP_{[0,L]}(M)$ is asymptotic to
\begin{equation} \label{eq:asymptotic}
\frac{e^{(n-1) L}}{(n-1) L} \quad \text{ as }L \to \infty.
\end{equation}

This is a remarkable fact, in part because the ultimate behavior does not depend on anything except the dimension of the manifold. On the other hand, it gives no information about what happens if we vary not only the length, but also the underlying manifold.

As a further application of our methods, we obtain uniform upper and lower bounds for the number of primitive geodesics whose lengths fall in a short interval that apply to all manifolds with systole bounded below. 

\begin{thm}\label{thm:unifpgt}
For every $n\geq 2$ and $\delta>0$, there exist constants $B_{n,\delta}, C_{n,\delta}, D_{n,\delta} >0$ such that for every closed hyperbolic $n$-manifold $M$ with $\sys(M)\geq  2\delta$, and every $L>0$, we have
 \begin{equation*} \label{eq:interval}
C_{n,\delta} \frac{e^{(n-1)L}}{L} - D_{n,\delta} \vol(M) \frac{e^{\frac{(n-1)}{2} L}}{L} \leq \#\calP_{[L-\delta,L+\delta]}(M) \leq  B_{n,\delta} \vol(M) \frac{e^{(n-1)L}}{L}.
\end{equation*}
\end{thm}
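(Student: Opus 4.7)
The plan is to apply the Selberg trace formula with a test function localized near $\pm L$, in the same Cohn--Elkies-inspired spirit as \thmref{thm:main}. For a closed hyperbolic $n$-manifold the trace formula reads schematically
\[
\sum_{j \geq 0} \hat h(r_j) \;=\; \vol(M)\, I_n(h) \;+\; \sum_{\gamma} \frac{\ell(\gamma_0)}{(2\sinh(\ell(\gamma)/2))^{n-1}}\, h(\ell(\gamma)),
\]
where $h$ is an even test function, $\hat h$ its Selberg transform, $\{r_j\}$ parametrizes the Laplace spectrum via $\lambda_j = r_j^2 + \rho^2$ with $\rho = (n-1)/2$, and the right-hand sum ranges over nontrivial oriented closed geodesics $\gamma$ with underlying primitive $\gamma_0$. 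I would take $h(t) = \phi(t-L)+\phi(-t-L)$ for an even nonnegative bump $\phi$ supported in $[-\delta,\delta]$, so that essentially $\hat h(r) = 2\cos(rL)\hat\phi(r)$ on the tempered spectrum and $\hat h(is) = 2\cosh(sL)\hat\phi(is)\geq 0$ on the imaginary axis (using $\phi\geq 0$). The hypothesis $\sys(M)\geq 2\delta$ ensures iterates $k\gamma_0$ with $k\ell(\gamma_0)\in[L-\delta,L+\delta]$ satisfy $2\leq k\leq (L+\delta)/(2\delta)$; bootstrapping the upper bound (by induction on $L$ or a crude volume-packing estimate) at each scale $L/k$ and summing the geometric series in $k$ shows their total contribution to the geometric side is $O_{n,\delta}(\vol(M)\,e^{(n-1)L/2}/L)$, precisely the error size claimed in the theorem.

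\textbf{Upper bound.} Each primitive $\gamma$ with $\ell(\gamma)\in[L-\delta,L+\delta]$ contributes weight at least $c_{n,\delta}\,L\,e^{-(n-1)L/2}$ on the geometric side, so
\[
c_{n,\delta}\,L\,e^{-(n-1)L/2}\,\#\calP_{[L-\delta,L+\delta]}(M) \;\leq\; \sum_j \hat h(r_j) \;-\; \vol(M)\,I_n(h) \;+\; \mathrm{(iterates)}.
\]
The identity term is negligible: $\hat h$ oscillates like $\cos(rL)$ against a tempered Plancherel density, so $I_n(h) = O_{n,\phi,N}(L^{-N})$ for every $N$ by repeated integration by parts. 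The spectral sum is bounded by $O_{n,\phi}(\vol(M)\,e^{(n-1)L/2})$ via Weyl's law: the tempered ($r_j$ real) sum satisfies $\bigl|\sum\hat h(r_j)\bigr|\leq 2\sum|\hat\phi(r_j)| = O_{n,\phi}(\vol(M))$ using the rapid decay of $\hat\phi$ together with the Weyl counting function, while the at-most-$O(\vol(M))$ complementary-series and zero-eigenvalue terms each contribute at most $\hat h(i\rho) \sim e^{(n-1)L/2}$. Dividing yields $\#\calP_{[L-\delta,L+\delta]}(M) \leq B_{n,\delta}\,\vol(M)\,e^{(n-1)L}/L$.

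\textbf{Lower bound and main obstacle.} For the lower bound, each primitive in the window contributes weight at most $C'_{n,\delta}\,L\,e^{-(n-1)L/2}$, so
\[
C'_{n,\delta}\,L\,e^{-(n-1)L/2}\,\#\calP_{[L-\delta,L+\delta]}(M) \;\geq\; \sum_j \hat h(r_j) \;-\; \vol(M)\,I_n(h) \;-\; \mathrm{(iterates)}.
\]
Because $\hat h(is)\geq 0$ for every $s\in[0,\rho]$, all complementary-series and zero-eigenvalue contributions to the spectral sum are nonnegative; in particular $\sum_j \hat h(r_j) \geq \hat h(i\rho) - \bigl|\sum_{r_j\in\RR}\hat h(r_j)\bigr|$, with $\hat h(i\rho) = 2\cosh(\rho L)\hat\phi(i\rho) \geq c_\phi\,e^{(n-1)L/2}$ since $\phi\geq 0$ is nontrivial. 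Together with the Weyl control of the tempered sum and the iterate bound, dividing produces the claimed $C_{n,\delta}\,e^{(n-1)L}/L - D_{n,\delta}\,\vol(M)\,e^{(n-1)L/2}/L$. The main obstacle is the oscillating tempered sum $\sum_{r_j\in\RR}\cos(r_jL)\hat\phi(r_j)$: positivity does not apply, so the bound $O_{n,\phi}(\vol(M))$ must come from combining a quantitatively effective Weyl law (holding for any closed hyperbolic $n$-manifold, independent of the systole) with sufficient decay of $\hat\phi$, uniformly in $L$; a secondary bookkeeping task is calibrating the bump profile $\phi$ so that the resulting implicit constants depend only on $n$ and $\delta$ and the main term comes with the precise $L^{-1}$ correction rather than, say, $L^{-2}$.
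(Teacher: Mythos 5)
Your strategy is close in spirit to the paper's --- apply the Selberg trace formula with a bump localized near $\pm L$ and extract the count from the geometric side --- but there is a genuine gap in how you control the tempered part of the spectral side, and it is exactly the point where the paper's test-function design does the real work.

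With $h(t)=\phi(t-L)+\phi(-t-L)$ you get $\hat h(\xi)=2\cos(L\xi)\hat\phi(\xi)$, which oscillates in sign on $\RR$. You propose to bound $\sum_{r_j\in\RR}\hat h(r_j)$ by $2\sum_{r_j\in\RR}|\hat\phi(r_j)|=O_{n,\phi}(\vol(M))$ via ``a quantitatively effective Weyl law, holding for any closed hyperbolic $n$-manifold, independent of the systole.'' No such systole-independent uniform Weyl bound is established in your proposal, and it is not a black-box fact: the short-time asymptotics of the wave/heat trace do depend on the injectivity radius, and the standard way to prove the uniform bound you need is a second application of the trace formula to the \emph{unshifted} bump $\phi$, whose geometric side vanishes precisely because $\supp\phi\subset(-\delta,\delta)$ and $\sys(M)\geq 2\delta$. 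So the bound you need is not systole-independent, and asserting it is the missing idea, not a bookkeeping task.

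The paper avoids the issue entirely by modifying the test function so its Fourier transform is sign-definite on $\RR$. For the upper bound one takes $G^L(x)=\tfrac12\bigl(g(x-L)+g(x+L)\bigr)-g(x)$, giving $\widehat{G^L}(\xi)=(\cos(L\xi)-1)\,h(\xi)\leq 0$ on $\RR$; the tempered sum is then $\leq 0$ and can simply be discarded, leaving only the complementary-series terms, which are bounded by the known estimate $O_n(\vol(M))$ on the number of small eigenvalues (Buser, Otal--Rosas, Buser--Colbois--Dodziuk). Since $\sys(M)\geq 2\delta$, the subtracted $g(x)$ never meets the length spectrum, so on the geometric side $G^L$ still acts like your bump. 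For the lower bound one uses $g(x)+\tfrac12\bigl(g(x-L)+g(x+L)\bigr)$ with transform $(1+\cos(L\xi))h(\xi)\geq 0$ on $\RR\cup i[-\nu,\nu]$, so the spectral sum is bounded below by the single term at $r_0=i\nu$. This is a cleaner and more elementary route that needs no Weyl law, only the small-eigenvalue bound. Your handling of iterates via bootstrapping the upper bound matches the paper's Corollary \ref{cor:primgeodupperbound} step and is fine; the oscillatory estimate $I_n(h)=O(L^{-N})$ is unnecessary overkill once the test function is sign-definite. To repair your proof, either switch to the shifted-minus/plus-unshifted test functions as in the paper, or supply the uniform Weyl bound explicitly by running the trace formula once more with the unshifted bump, using $\sys(M)\geq 2\delta$ to kill the geometric side.
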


One can think of the lower bound as an analogue of Bertrand's postulate in number theory, for it implies that for all large enough $N>0$ there is a primitive closed geodesic $\gamma$ in $M$ whose norm $e^{\ell(\gamma)}$ is between $N$ and $e^{2\delta}N$. 

By integrating the above inequalities, we obtain similar bounds for the number of primitive geodesics of length at most $L$, matching the asymptotic \eqref{eq:asymptotic} up to multiplicative constants.

\begin{cor}\label{cor:unifpgt}
For every $n\geq 2$ and $\delta>0$, there exist constants $B_{n,\delta}',C_{n,\delta}',D_{n,\delta}'>0$ such that for every closed hyperbolic $n$-manifold $M$ with $\sys(M)\geq 2\delta$, and every $L>0$,
\[  C_{n,\delta}' \frac{e^{(n-1)L}}{L} - D_{n,\delta}' \vol(M) \frac{e^{\frac{(n-1)}{2} L}}{L} \leq \#\calP_{[0,L]}(M) \leq  B_{n,\delta}' \vol(M) \frac{e^{(n-1)L}}{L}.\]
\end{cor}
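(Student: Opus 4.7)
My plan is to derive both inequalities from \thmref{thm:unifpgt} by partitioning $[0,L]$ into sub-intervals of length $2\delta$. Setting $K = \lceil L/(2\delta)\rceil$ and $L_k = (2k+1)\delta$, the intervals $[L_k - \delta, L_k + \delta]$ for $0 \leq k \leq K-1$ cover $[0,L]$, and applying the upper bound of \thmref{thm:unifpgt} termwise gives
\[ \#\calP_{[0,L]}(M) \;\leq\; B_{n,\delta}\, \vol(M) \sum_{k=0}^{K-1} \frac{e^{(n-1)L_k}}{L_k}. \]

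For the upper bound I need to show that the right-hand sum is bounded by a constant multiple of $e^{(n-1)L}/L$. The key observation is that $e^{(n-1)t}/t$ is monotonically increasing on $t \geq 1/(n-1)$, so once $k$ exceeds some threshold $k_0 = k_0(n,\delta)$, the ratio of consecutive terms is bounded below by a fixed $\rho > 1$. The tail $\sum_{k \geq k_0}$ is therefore a geometrically increasing sum dominated by a constant multiple of its last term $e^{(n-1)L_{K-1}}/L_{K-1}$, and since $L_{K-1} \in [L-\delta, L+\delta]$ this last term is comparable to $e^{(n-1)L}/L$. The initial block $\sum_{k < k_0}$ contributes only a bounded quantity, which is absorbed into the final constant for $L$ bounded away from $0$ (using that $e^{(n-1)L}/L \to \infty$ as $L \to 0^+$) and otherwise handled by a single direct application of \thmref{thm:unifpgt}.

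For the lower bound, when $L \geq 2\delta$ I simply note $\#\calP_{[0,L]}(M) \geq \#\calP_{[L-2\delta, L]}(M)$ and apply \thmref{thm:unifpgt} to the centered point $L - \delta$, obtaining
\[ \#\calP_{[0,L]}(M) \;\geq\; C_{n,\delta}\, \frac{e^{(n-1)(L-\delta)}}{L-\delta} \;-\; D_{n,\delta}\, \vol(M)\, \frac{e^{(n-1)(L-\delta)/2}}{L-\delta}. \]
The bounds $e^{(n-1)(L-\delta)} \geq e^{-(n-1)\delta} e^{(n-1)L}$ and $1/(L-\delta) \leq 2/L$ (valid for $L \geq 2\delta$) convert this into the desired form with constants $C_{n,\delta}' = C_{n,\delta} e^{-(n-1)\delta}$ and $D_{n,\delta}' = 2D_{n,\delta}$. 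For $L < 2\delta$, the hypothesis $\sys(M) \geq 2\delta$ forces every ball of radius $\delta$ to embed, so $\vol(M) \geq v_n(\delta) > 0$ for a constant depending only on $n$ and $\delta$; enlarging $D_{n,\delta}'$ if necessary then makes the claimed lower bound non-positive and hence trivial, since $\#\calP_{[0,L]}(M) \geq 0$.

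The main obstacle is the bookkeeping in the upper bound: identifying the correct monotonicity threshold $k_0$, controlling the initial block, and gluing the estimates for large and small $L$ with a single constant. The lower bound, by contrast, is essentially immediate from \thmref{thm:unifpgt} once the short-$L$ range is dispatched via the systole-induced volume lower bound.
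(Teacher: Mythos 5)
Your argument is correct, and the lower bound takes a genuinely different (and simpler) route than the paper's. For the upper bound you partition $[0,L]$ into $\lceil L/(2\delta)\rceil$ half-open intervals of length $2\delta$ and bound the resulting geometric-type sum by its last term; the paper instead integrates the upper bound of \propref{prop:primupper} over $[3,L]$ and bounds the resulting logarithmic integral $\li(e^{(n-1)L})$. These are the same idea in discrete versus continuous form — both rest on the eventual monotone growth of $e^{(n-1)x}/x$ and both handle the low-$L$ range via the bound $e^{(n-1)x}/x \geq (n-1)e$ — so neither has a decisive advantage, though the paper's $\li$-based computation is cleaner to write out than tracking the threshold $k_0$ and geometric ratio $\rho$. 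Where you genuinely diverge is the lower bound: the paper again integrates \propref{prop:primlower} over $[6,L]$ and invokes two-sided estimates for $\li$, whereas you observe that $\#\calP_{[0,L]}(M) \geq \#\calP_{[L-2\delta,L]}(M)$ and apply \thmref{thm:unifpgt} once at the centered point $L-\delta$. This avoids $\li$ entirely and is shorter; it loses nothing asymptotically because the exponential weighting concentrates essentially all the count near $L$. Your dispatch of the range $L<2\delta$ — using the systole hypothesis to embed a $\delta$-ball, hence $\vol(M) \geq v_n(\delta)$, hence the claimed lower bound can be made nonpositive by enlarging $D_{n,\delta}'$ — is also correct and mirrors the paper's remark that the volume is uniformly bounded below. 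One minor quibble: in the upper bound discussion, what you actually need is that $e^{(n-1)L}/L$ is bounded \emph{below} uniformly for $L>0$ (by $(n-1)e$), not that it diverges as $L\to 0^+$; the latter phrasing slightly obscures the point.
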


Note that the lower bound reproves the well-known inequality
\[
\sys(M) \leq \frac{2}{n-1} \log(\vol(M)) + \text{const.}
\] 
albeit in a somewhat complicated way.

In dimension $n\geq 3$, the existence of a constant $E(n,\delta, v)>0$ such that all closed hyperbolic $n$-manifolds $M$ with systole at least $2 \delta$ and volume at most $v$ satisfy
\[
  \#\calP_{[0,L]}(M)\leq  E(n,\delta,v) \frac{e^{(n-1)L}}{L} \quad \quad \text{for all }L>0
\]
and 
\[
\#\calP_{[0,L]}(M)\geq  \frac{1}{E(n,\delta,v)} \frac{e^{(n-1)L}}{L} \quad \quad \text{for all }L\text{ large enough}
\]
can be deduced from the prime geodesic theorem and the fact that there are only finitely many hyperbolic $n$-manifolds with bounded geometry (see \cite{Wang} for $n\geq 4$ and \cite[Theorem E.4.8]{BP} for $n=3$). The advantage of our results is that they make the dependence on volume explicit.

In dimension $2$, there are infinitely many manifolds of a given volume with systole bounded below, so the fact that length spectrum bounds hold uniformly for all of them is not obvious. A uniform upper bound without the requirement that the systole be bounded below but with faster growth rate $B\vol(M) e^L$ was previously obtained in \cite[p.162]{Buser}. In \cite[Lemma 5.1]{ABG}, surfaces $M$ with $\sys(M) \approx e^{-L/4}$ and  $\#\calP_{[0,L]} \geq C \vol(M) e^{L/4}$ are constructed. Our uniform lower bound for thick surfaces appears to be new, and grows faster as a function of $L$ for a fixed genus.

Finally, we note that even though we will not pursue this (except in dimension $2$), all the constants above are effectively computable.

\section{Short geodesics} \label{sec:thin}

We first comment on the number of short geodesics in closed hyperbolic manifolds. In dimension $2$, the collar lemma \cite{Keen} implies that in a closed oriented hyperbolic surface of genus $g$, distinct primitive (unoriented) closed geodesics of length at most $2 \sinh^{-1}(1)$ are disjoint, so there are at most $3g-3$ of them. Since the area of a closed oriented hyperbolic surface $M$ of genus $g$ is $4\pi(g-1)$, this implies that
\begin{equation} \label{eq:thin2}
\kiss(M) \leq \#\calP_{[0,2 \sinh^{-1}(1)]}(M) \leq \frac{3}{2 \pi} \vol(M)
\end{equation}
whenever $\sys(M) \leq 2 \sinh^{-1}(1)$.

For closed oriented  hyperbolic manifolds of dimension $n \geq 3$, Buser \cite[\S 4]{BuserDim3} proved that any primitive closed geodesic $\gamma$ of length $\ell(\gamma)\leq 4^{-(n+2)}$ has a tubular neighborhood $T_\gamma$ that satsifies
\[
\vol(T_\gamma) \geq K_n \, \ell(\gamma)^{-\lfloor\frac{n-2}{2}\rfloor / \lfloor\frac{n+1}{2}\rfloor}
\]
for come constant $K_ n>0$ depending on dimension only. As Buser notes, in dimension $3$ the lower bound is constant, which is the best one can hope for in view of Thurston's Dehn filling theorem. 

Since tubes $T_\gamma$ corresponding to primitive (unoriented) closed geodesics of length at most $4^{-(n+2)}$ are pairwise disjoint \cite[Theorem 4.13]{BuserDim3}, we obtain 
\begin{equation} \label{eq:thin3}
\kiss(M) \leq \#\calP_{[0,L]}(M) \leq A_n' \vol(M)  L^{\lfloor\frac{n-2}{2}\rfloor / \lfloor\frac{n+1}{2}\rfloor}
\end{equation}
whenever $\sys(M) \leq L \leq 4^{-(n+2)}$, where $A_n':= 2/K_n$.

If $M$ is non-orientable, we can pass to the orientable double cover $\widetilde M$ which satisfies $\#\calP_{[0,L]}(M) \leq \#\calP_{[0,2L]}(\widetilde M)$ and $\vol(\widetilde M) = 2 \vol(M)$ to get similar inequalities with additional factors of $2$.

As claimed in the introduction, inequalities \eqref{eq:thin2} and \eqref{eq:thin3} imply \thmref{thm:main} for ma\-ni\-folds with small systole. This is because $e^{(n-1)x/2} / x \geq (n-1)e/2$ for every $x>0$, so the term involving the systole in \thmref{thm:main} is larger than a constant times $\sys(M)^{\lfloor\frac{n-2}{2}\rfloor / \lfloor\frac{n+1}{2}\rfloor}$ provided we restrict $\sys(M)$ to some interval $(0,\eps_n]$.

\section{The Selberg trace formula}

Selberg introduced his trace formula for discrete groups of isometries of the hyperbolic plane in \cite{Selberg} (see \cite{Buser} for a modern exposition). This was generalized to closed hyperbolic manifolds of any dimension in \cite{Randol}, \cite{Deitmar} and \cite{Parnovskii}. We will follow the notation from \cite{Randol} and \cite{Buser}.

Let $M$ be a closed hyperbolic manifold of dimension $n\geq 2$, that is, a quotient of the hyperbolic space $\HH^n$ by a discrete, torsion-free, cocompact group $\Gamma$ of isometries (not necessarily orientation-preserving). Let 
\[
0=\lambda_0 < \lambda_1 \leq \lambda_2 \leq \cdots
\]
be the eigenvalues of the (negative) Laplacian on $M$, repeated according to their multiplicity. For each integer $j\geq 0$, let
\[
r_j : = 
\begin{cases}
i \,\sqrt{\frac{(n-1)^2}{4} - \lambda_j} & \text{if } 0 \leq \lambda_j \leq \frac{(n-1)^2}{4} \\
\sqrt{\lambda_j - \frac{(n-1)^2}{4}} & \text{if } \lambda_j > \frac{(n-1)^2}{4}
\end{cases}
\]
where the non-negative square root is used and $i = \sqrt{-1}$ is the imaginary unit.

Let $\calC(M)$ be the set of closed oriented geodesics in $M$. These are in one-to-one corres\-pondance with non-trivial conjugacy classes in the group $\Gamma$. The length of a geodesic $\gamma \in \calC(M)$ is denoted by $\ell(\gamma)$ and its norm is defined as $N_\gamma:= e^{\ell(\gamma)}$.  A geodesic is called \emph{primitive} if it is not a proper power of another geodesic. For $\gamma \in \calC(M)$, we set $\Lambda(\gamma):=\ell(\gamma_0)$ where $\gamma_0$ is the unique primitive closed geodesic such that $\gamma=\gamma_0^m$ for some $m\geq 1$. Finally, we let $P_\gamma$ be the holonomy around $\gamma$, restricted to its normal bundle. That is, given a point $p=\gamma(t)$ and a tangent vector $v \in T_p M$ orthogonal to $\gamma'(t)$, the vector $P_\gamma(v)$ is obtained by parallel transporting $v$ around $\gamma$. In other words, $P_\gamma$ is the rotational component of the loxodromic isometry corresponding to $\gamma$ in $\Gamma$. We define
\[
D(\gamma):= |\det(I -N_\gamma^{-1} P_\gamma^{-1})|
\]
where $I$ is the identity on  $\gamma'(t)^\perp$. This quantity does not depend on the point $p$.

A pair of functions $(g,h)$ is called \emph{admissible} (in dimension $n$) or an \emph{admissible transform pair} if $g: \RR \to \CC$ is even and integrable, and its \emph{Fourier transform}
\[
h(\xi):= \what{g}(\xi) = \int_{-\infty}^\infty g(x) e^{-i \xi x} \,dx
\]
is holomorphic and satisfies the decay condition
\begin{equation} \label{eq:decay}
|h(\xi)| = O(|\xi|^{-n-\eps})
\end{equation}
in the strip $\st{\xi \in \CC}{|\im \xi|< \frac{n-1}{2}+\eps}$ for some $\eps>0$. This convention for the Fourier transform is sometimes called non-unitary with angular frequency.

The Plancherel density for $\HH^n$ is given by
\[
\Phi_n(r) = \frac{r \tanh(\pi r)}{(2\pi)^\frac{n}{2}(n-2)!!} \prod_{k=0}^{\frac{n-4}{2}}\left(r^2 + \left(k+\frac12\right)^2 \right) \quad \quad \text{if }n\text{ is even}
\]
or
\[
\Phi_n(r) =  \frac{1}{2^\frac{n-1}{2}\pi^\frac{n+1}{2}(n-2)!!} \prod_{k=0}^{\frac{n-3}{2}}\left(r^2 + k^2 \right)  \quad \quad \text{if }n\text{ is odd,}
\]
where $j!!$ is the product of the integers between $1$ and $j$ with the same parity as $j$, and an empty product is equal to $1$. Randol obtains different expressions for $\Phi_n$, but observes that it can be formulated in terms of the classical gamma function using Harish-Chandra's Plancherel formula for spherical functions\footnote{The penultimate equation on p.292 of \cite{Randol} appears to contain a typographical error as it does not coincide with the formula for $\Phi_3$ given a few lines above it.}. The above equations are taken from \cite{Parnovskii}.

The following relationship between the Laplace spectrum and the length spectrum holds for all closed hyperbolic manifolds \cite[p.292]{Randol}.

\begin{thm}[Selberg's trace formula]
Let $M$ be a closed hyperbolic manifold of dimension $n\geq 2$. For any admissible transform pair $(g,h)$ we have
\begin{equation} \label{eq:STF}
\sum_{j=0}^\infty h(r_j) =  \vol(M) \int_0^\infty h(r) \Phi_n(r)\, dr + \sum_{\gamma \in \calC(M)}  \frac{\Lambda(\gamma)}{N_\gamma^{(n-1)/2} D(\gamma)} g(\ell(\gamma)).
\end{equation}
\end{thm}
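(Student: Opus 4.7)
The plan is to prove the identity by constructing an integral operator on $L^2(M)$ out of the transform pair $(g,h)$ and computing its trace in two independent ways: spectrally, using an orthonormal basis of Laplace eigenfunctions on $M$, and geometrically, by integrating the kernel along the diagonal of a fundamental domain and organizing the sum over $\Gamma$ by conjugacy classes. This is the classical strategy introduced by Selberg in dimension $2$ and extended to higher dimensions in the references cited above.

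First I would invert the spherical Fourier transform on $\HH^n$ to produce a radial point-pair invariant $k\colon \HH^n \times \HH^n \to \CC$ of the form $k(x,y) = \varphi(\dist(x,y))$ whose spherical transform equals $h$. The admissibility conditions---evenness and integrability of $g$, together with the holomorphy and decay \eqref{eq:decay} of $h$ on a horizontal strip of width greater than $\tfrac{n-1}{2}$---are precisely what is needed to carry out the inversion and to obtain a kernel $\varphi$ with enough decay. Summing over $\Gamma$ gives the automorphic kernel
\begin{equation*}
K(x,y) = \sum_{\gamma \in \Gamma} k(x,\gamma y),
\end{equation*}
which descends to $M\times M$ and defines a trace-class integral operator $T_K$ on $L^2(M)$; absolute convergence of the defining sum follows from the decay of $k$ together with the exponential upper bound on the orbit-counting function of $\Gamma$.

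On the spectral side, $T_K$ commutes with the Laplacian and each $L^2$-eigenfunction $\phi_j$ is automatically an eigenfunction of $T_K$ with eigenvalue $h(r_j)$, by the defining property of the spherical transform. Hence $\tr{T_K} = \sum_{j=0}^\infty h(r_j)$, which is the left-hand side of \eqref{eq:STF}. On the geometric side I would compute $\tr{T_K} = \int_M K(x,x)\,dx$ by grouping the terms of the sum defining $K$ according to conjugacy classes in $\Gamma$. The identity contributes $\vol(M)\cdot k(x,x)$, which equals $\vol(M)\int_0^\infty h(r)\Phi_n(r)\,dr$ by the Plancherel theorem for the spherical transform on $\HH^n$; this is precisely how the density $\Phi_n$ enters. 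Each non-trivial conjugacy class corresponds to a closed oriented geodesic $\gamma \in \calC(M)$, and its centralizer is generated by the underlying primitive geodesic, which accounts for the factor $\Lambda(\gamma)$.

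The main technical obstacle is evaluating the orbital integral attached to each loxodromic conjugacy class. I would fix an axis in $\HH^n$ for a representative isometry and introduce Fermi coordinates $(t,v)$ in which $t\in\RR$ parametrizes the axis and $v$ lies in its normal bundle, then integrate $k$ over a fundamental cylinder for the action of the centralizer. The translation by $\ell(\gamma)$ along the axis combines with the one-dimensional Fourier integral defining $\varphi$ to produce the value $g(\ell(\gamma))$, while the rotational action of the holonomy $P_\gamma$ on the normal fiber, together with the hyperbolic volume element in Fermi coordinates, generates the Jacobian $\left|\det(I - N_\gamma^{-1}P_\gamma^{-1})\right| = D(\gamma)$ and the scaling factor $N_\gamma^{-(n-1)/2}$. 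Carrying out this orbital integral in closed form, and verifying that the Plancherel density agrees with the explicit formula for $\Phi_n$ stated above, constitutes the bulk of the work; for both steps I would follow the computations in Randol and Parnovskii rather than redo Harish-Chandra's spherical Plancherel theorem from scratch.
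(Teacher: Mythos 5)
The paper does not prove this theorem; it states it as background and cites Randol (and Deitmar, Parnovskii, and Selberg/Buser for $n=2$) for the proof, adding only a remark reconciling normalization conventions. Your sketch is a correct outline of the standard pre-trace-formula argument that those references carry out---constructing the radial kernel by inverting the spherical transform, forming the automorphic kernel, computing the trace spectrally via $T_K\phi_j = h(r_j)\phi_j$ and geometrically by grouping the $\Gamma$-sum into conjugacy classes, and evaluating the identity and loxodromic orbital integrals to produce the Plancherel term and the length-spectrum sum respectively---so it matches the cited proof rather than offering a genuinely different route.
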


\begin{rem}
Randol takes two roots $r_j$ for each eigenvalue $\lambda_j$, so his formula differs from the above by a factor of $2$. \eqnref{eq:STF} agrees with \cite[p.253]{Buser} for $n=2$.
\end{rem}

In applications, it will be convenient to estimate the factor $D(\gamma)$ in the trace formula in terms of length alone. This is done in the following lemma.

\begin{lem} \label{lem:det}
For any closed geodesic $\gamma$ in a closed hyperbolic $n$-manifold, we have
\[
(1-N_\gamma^{-1})^{n-1} \leq D(\gamma)\leq (1+N_\gamma^{-1})^{n-1} < 2^{n-1}.
\]
\end{lem}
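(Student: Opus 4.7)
The plan is to diagonalize $P_\gamma^{-1}$ over $\CC$ and reduce the determinant to a product of $n-1$ scalar factors, each of which I can bound individually by the triangle inequality.

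First I would recall that the holonomy $P_\gamma$ is an orthogonal transformation of the $(n-1)$-dimensional real inner product space $\gamma'(t)^\perp$, hence so is $P_\gamma^{-1}$. Its complex eigenvalues therefore all lie on the unit circle and can be written $e^{i\theta_1},\dots,e^{i\theta_{n-1}}$ (with the non-real ones appearing in conjugate pairs and $\pm 1$ allowed). Complexifying, one obtains
\[
\det\bigl(I - N_\gamma^{-1} P_\gamma^{-1}\bigr) = \prod_{k=1}^{n-1}\bigl(1 - N_\gamma^{-1} e^{i\theta_k}\bigr),
\]
and taking absolute values gives
\[
D(\gamma) = \prod_{k=1}^{n-1} \bigl| 1 - N_\gamma^{-1} e^{i\theta_k}\bigr|.
\]

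Second, for any $r \in (0,1)$ and any $\theta \in \RR$, the triangle inequality (applied both ways to the vectors $1$ and $-re^{i\theta}$ in $\CC$) yields
\[
1 - r \;\leq\; \bigl|1 - r e^{i\theta}\bigr| \;\leq\; 1 + r.
\]
Since $N_\gamma = e^{\ell(\gamma)} > 1$, we have $r := N_\gamma^{-1} \in (0,1)$, and applying this estimate to each of the $n-1$ factors in the product above produces
\[
(1 - N_\gamma^{-1})^{n-1} \;\leq\; D(\gamma) \;\leq\; (1 + N_\gamma^{-1})^{n-1}.
\]
Finally, the bound $N_\gamma^{-1} < 1$ gives $(1+N_\gamma^{-1})^{n-1} < 2^{n-1}$.

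I do not anticipate any real obstacle: the only thing worth double-checking is that $P_\gamma$ is genuinely orthogonal, which is automatic because parallel transport along a geodesic in a Riemannian manifold preserves the Riemannian metric on the normal bundle, and that the independence of $D(\gamma)$ on the base point $p$ (already asserted in the text) legitimises speaking of eigenvalues without reference to a particular fibre.
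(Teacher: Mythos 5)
Your proof is correct and takes essentially the same approach as the paper: both bound the eigenvalues of $I - N_\gamma^{-1}P_\gamma^{-1}$ using the triangle inequality and the fact that $P_\gamma$ is an isometry of the normal bundle. You make the diagonalization of $P_\gamma^{-1}$ explicit and bound the resulting scalar factors $|1 - N_\gamma^{-1}e^{i\theta_k}|$, whereas the paper applies the norm inequalities $\|(I - N_\gamma^{-1}P_\gamma^{-1})v\| \lessgtr (1\pm N_\gamma^{-1})\|v\|$ directly to eigenvectors of $I - N_\gamma^{-1}P_\gamma^{-1}$; these are the same estimate phrased in two ways.
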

\begin{proof}
Recall that $D(\gamma) = |\det(I -N_\gamma^{-1} P_\gamma^{-1})|$ is the product of the absolute values of the eigenvalues of $I -N_\gamma^{-1} P_\gamma^{-1}$. To prove the first two inequalities, it suffices to show that all the eigenvalues are between $1-N_\gamma^{-1}$ and $1+N_\gamma^{-1}$ in absolute value. For any vector $v \in T_pM$ we have 
\[
\norm{(I - N_\gamma^{-1} P_\gamma^{-1})v} = \norm{v - N_\gamma^{-1} P_\gamma^{-1}v} \leq \norm{v} + N_\gamma^{-1}\norm{P_\gamma^{-1}v} = (1+N_\gamma^{-1})\norm{v}
\]
by the triangle inequality and the fact that $P_\gamma$ is an isometry. Similarly, \[\norm{(I - N_\gamma^{-1} P_\gamma^{-1})v}\geq  (1-N_\gamma^{-1})\norm{v}.\] Applying these inequalities to the eigenvectors of $I -N_\gamma^{-1} P_\gamma^{-1}$ implies the required bounds on eigenvalues. The last inequality follows from the fact that $N_\gamma = e^{\ell(\gamma)}>1$.
\end{proof}

\section{Kissing numbers}

In this section, we use the Selberg trace formula to obtain a general bound on the kissing numbers of closed hyperbolic manifolds, proving Theorem \ref{thm:main} and \corref{cor:volume}. Even though this method works for all manifolds, the constant we obtain blows up as the systole tends to zero. In that case, we rely on the results from Section \ref{sec:thin} instead.

The idea of the proof is to find a transform pair $(g,h)$ which is well suited for counting shortest closed geodesics. By restricting the signs of these functions, we can obtain bounds for the kissing number.

\begin{prop} \label{prop:strategy}
Let $n\geq 2$ and let $M$ be a closed hyperbolic $n$-manifold. Suppose that $(g,h)$ is an admissible transform pair such that
\begin{itemize}
\item $g(x) \leq 0$ for every $x \geq \sys(M)$;
\item $h(\xi) \geq 0$ for every $ \xi \in\RR \cup i \left[\frac{1-n}{2}, \frac{n-1}{2}\right]$.
\end{itemize}
Then
\begin{equation} \label{eq:strategy}
\kiss(M) \frac{2^{1-n} \sys(M)}{e^{(n-1)\sys(M)/2}} \, |g(\sys(M))|  \leq \vol(M) \int_0^\infty h(r) \Phi_n(r)\, dr.
\end{equation}
\end{prop}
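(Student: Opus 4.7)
The plan is a direct application of the Selberg trace formula \eqref{eq:STF} to the given admissible pair $(g,h)$, combined with the sign hypotheses to isolate the contribution of the shortest closed geodesics. First I would verify that every spectral parameter $r_j$ lies in the set $\RR\cup i[(1-n)/2,(n-1)/2]$ on which $h\geq 0$: by the definition of $r_j$, when $\lambda_j\leq (n-1)^2/4$ the parameter $r_j$ is a nonnegative imaginary number of modulus at most $(n-1)/2$, and otherwise it is a nonnegative real. Consequently $h(r_j)\geq 0$ for all $j$, so the spectral side of \eqref{eq:STF} is nonnegative, and rearranging gives
\[
\vol(M)\int_0^\infty h(r)\Phi_n(r)\,dr\;\geq\;-\sum_{\gamma\in\calC(M)}\frac{\Lambda(\gamma)}{N_\gamma^{(n-1)/2}D(\gamma)}\,g(\ell(\gamma)).
\]

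Next I would exploit the sign hypothesis on $g$. Every closed geodesic $\gamma\in\calC(M)$ satisfies $\ell(\gamma)\geq\sys(M)$, so $g(\ell(\gamma))\leq 0$, and each summand on the right is therefore nonnegative. Dropping all geodesics except those realising the systole only decreases the right-hand side. A systole-realising geodesic $\gamma$ is necessarily primitive (otherwise a shorter primitive geodesic would exist), so $\Lambda(\gamma)=\ell(\gamma)=\sys(M)$ and $N_\gamma = e^{\sys(M)}$. By \lemref{lem:det} we have $D(\gamma)\leq 2^{n-1}$, hence each such term is at least
\[
\frac{\sys(M)}{e^{(n-1)\sys(M)/2}}\cdot\frac{|g(\sys(M))|}{2^{n-1}}.
\]
Summing over the $\kiss(M)$ oriented shortest geodesics yields the inequality \eqref{eq:strategy}.

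The argument is essentially a positivity bookkeeping, and I expect no real obstacles. The one subtle point worth handling carefully is the claim that the spectral sum $\sum_j h(r_j)$ converges and that the trace formula itself is applicable; but these are built into the definition of admissibility and the decay condition \eqref{eq:decay}, so nothing more is needed beyond quoting the trace formula. The only other place where one might slip is confirming that a shortest geodesic is primitive and noting that $g(\sys(M))\leq 0$ follows from the hypothesis by taking $x=\sys(M)$, both of which are immediate.
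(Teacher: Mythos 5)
Your proof is correct and follows essentially the same route as the paper's: apply the trace formula to $(g,h)$, use the sign conditions to make both the spectral sum and (after moving it across) each geodesic term nonnegative, discard all but the systole-realising geodesics, note those are primitive, and bound $D(\gamma)$ by $2^{n-1}$ via Lemma~\ref{lem:det}. No gaps, and no meaningful deviation from the paper's argument.
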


\begin{proof}
The hypothesis implies that $h(r_j) \geq 0$ for every $j\geq 0$. From the Selberg trace formula \eqref{eq:STF}, we get
\[
0 \leq \sum_{j=0}^\infty h(r_j) = \vol(M) \int_0^\infty h(r) \Phi_n(r)\, dr + \sum_{\gamma\in \calC(M)} \frac{\Lambda(\gamma)}{N_\gamma^{(n-1)/2}D(\gamma)}  g(\ell(\gamma))
\]
or
\begin{equation} \label{eq:basic}
\sum_{\gamma\in \calC(M)} \frac{\Lambda(\gamma)}{N_\gamma^{(n-1)/2} D(\gamma)}  |g(\ell(\gamma))| \leq \vol(M) \int_0^\infty h(r) \Phi_n(r)\, dr
\end{equation}
after subtracting the sum from both sides (recall that $g(\ell(\gamma)) \leq 0$ by hypothesis).

If $\gamma$ is a shortest closed geodesic in $M$, then it is primitive so that \[\Lambda(\gamma) = \ell(\gamma) = \sys(M)\] and the corresponding summand in the Selberg trace formula satisfies
\begin{equation*} 
\frac{\Lambda(\gamma)}{N_\gamma^{(n-1)/2} D(\gamma)}  |g(\ell(\gamma))|  \geq  \frac{2^{1-n}\sys(M)}{e^{(n-1)\sys(M)/2}} \, |g(\sys(M))| 
\end{equation*}
according to \lemref{lem:det}. By summing over all the shortest closed geodesics in $M$ and disregarding the other terms in \eqref{eq:basic}, we obtain inequality \eqref{eq:strategy}.
\end{proof}


In order to make the estimate from \propref{prop:strategy} useful, we need to find admissible transform pairs $(g,h)$ satisfying the hypotheses such that $|g(\sys(M))|$ is not too small and the integral of $h$ is not too large. In the proof of the following proposition, we give a simple recipe for obtaining such pairs from a bump function whose Fourier transform satisfies a sign condition. 

\begin{prop} \label{prop:suited}
Let $n\geq 2$ and $\eps>0$, and let $M$ be a closed hyperbolic $n$-manifold with $\sys(M) \geq \eps$. Suppose that $(g_\eps,h_\eps)$ is an admissible transform pair such that 
\begin{itemize}
\item $g_\eps(x) \geq 0$ for every $x \in \RR$, with equality outside $(-\eps,\eps)$;
\item $h_\eps(\xi) \geq 0$ for every $\xi \in \RR \cup i \left[\frac{1-n}{2}, \frac{n-1}{2}\right]$.
\end{itemize}
Then
\begin{equation} \label{eq:suitable}
\kiss(M) \frac{\sys(M)}{e^{(n-1)\sys(M)/2}} \, g_\eps(0)  \leq 2^{n+1}\left(1+e^{\frac{(n-1)\eps}{2}}\right)\vol(M) \int_0^\infty h_\eps(r) \Phi_n(r)\, dr.
\end{equation}
\end{prop}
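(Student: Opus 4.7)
The plan is to manufacture from the given $(g_\eps,h_\eps)$ a new admissible transform pair $(g,h)$ that satisfies the sign conditions of Proposition~\ref{prop:strategy}, and then invoke it. Writing $s=\sys(M)$, I would take
\[
g(x) \;=\; C\,g_\eps(x) - g_\eps(x-s) - g_\eps(x+s),
\qquad
h(\xi) \;=\; \bigl(C - 2\cos(s\xi)\bigr)\,h_\eps(\xi),
\]
where the constant $C>0$ is chosen so that $h$ is non-negative on $\RR\cup i[-(n-1)/2,(n-1)/2]$. The function $g$ is even because $g_\eps$ is, and since $g_\eps$ is supported in $(-\eps,\eps)\subset(-s,s)$, the first and last terms vanish when $x\geq s$, leaving $g(x)=-g_\eps(x-s)\leq 0$ there; in particular $g(s)=-g_\eps(0)$. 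Non-negativity of $h$ on the real line forces $C\geq 2$, while on the imaginary segment $|\im \xi|\leq (n-1)/2$ the worst case is $\xi=\pm i(n-1)/2$, requiring $C\geq 2\cosh((n-1)s/2)$.

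With $C=2\cosh((n-1)s/2)$, Proposition~\ref{prop:strategy} applies and yields
\[
\kiss(M)\,\frac{2^{1-n}\,s}{e^{(n-1)s/2}}\,g_\eps(0)
\;\leq\;\vol(M)\int_0^\infty h(r)\,\Phi_n(r)\,dr.
\]
The crude estimate $|\cos(sr)|\leq 1$ on $\RR$ controls the right-hand side by $(C+2)\int h_\eps\Phi_n\,dr = 2\bigl(1+\cosh((n-1)s/2)\bigr)\int h_\eps\Phi_n\,dr$, and multiplying through by $2^{n-1}$ produces an inequality of the general form advertised.

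The main obstacle is pinning down the precise constant $2^{n+1}\bigl(1+e^{(n-1)\eps/2}\bigr)$ on the spectral side. The approach just outlined gives only $2^n\bigl(1+\cosh((n-1)s/2)\bigr)$, which grows like $e^{(n-1)s/2}$ rather than $e^{(n-1)\eps/2}$. Obtaining the sharper factor requires a finer spectral analysis: one should combine the identity $\sum_j h_\eps(r_j)=\vol(M)\int_0^\infty h_\eps\Phi_n\,dr$ --- obtained by applying the trace formula directly to $(g_\eps,h_\eps)$, whose geodesic side vanishes when $\sys(M)\geq\eps$ --- with the Paley--Wiener estimate $h_\eps(it)\leq \cosh((n-1)\eps/2)\,h_\eps(0)$ valid for $|t|\leq (n-1)/2$. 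These two inputs should allow the contribution of the imaginary frequencies $r_j\in i[0,(n-1)/2]$ to $\sum_j \cos(sr_j)h_\eps(r_j)$, which is the source of the exponentially-large-in-$s$ factor $\cosh((n-1)s/2)$ in the naive bound, to be absorbed instead into an $\eps$-dependent factor, delivering the stated constant after clearing the $2^{1-n}$ on the left.
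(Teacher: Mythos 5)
Your construction $g = C\,g_\eps - g_\eps(\cdot-s) - g_\eps(\cdot+s)$ is the natural first attempt, and you correctly diagnose its defect: forcing $h(i\nu) = (C - 2\cosh(\nu s))\,h_\eps(i\nu)\geq 0$ (where $\nu=(n-1)/2$ and $s=\sys(M)$) requires $C\geq 2\cosh(\nu s)$, which blows up in $s$, not $\eps$. But the proposed fix is not viable. If you feed $(g_\eps,h_\eps)$ into the trace formula and subtract $C$ times that identity from the trace formula applied to $(g,h)$, the constant $C$ cancels entirely and what remains is just the trace formula for $\tilde g(x) = -(g_\eps(x-s)+g_\eps(x+s))$; the resulting ``bound'' is a tautology, not an improvement. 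More fundamentally, no amount of post-processing can rescue this $g$: Proposition~\ref{prop:strategy} requires $h\geq 0$ on all of $i[-\nu,\nu]$ because $r_0=i\nu$ is always a root, so the constraint $C\geq 2\cosh(\nu s)$ is forced, and the Paley--Wiener bound $h_\eps(it)\leq\cosh(\nu\eps)h_\eps(0)$ cannot absorb the separate factor $\cosh(s\nu)$ coming from $\cos(sr_0)$.

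The paper's resolution, which is the missing idea, is to introduce a second pair of shifts at distance $R-\eps$ (with $R=\sys(M)$), not just at distance $R$. Concretely, they take
\[
G(x) = \left(1+e^{\nu\eps}\right)g_\eps(x) + e^{\nu\eps}\,\frac{g_\eps(x-R+\eps)+g_\eps(x+R-\eps)}{2} - \frac{g_\eps(x-R)+g_\eps(x+R)}{2},
\]
whose transform is $H(\xi) = \left(1+e^{\nu\eps}+e^{\nu\eps}\cos((R-\eps)\xi) - \cos(R\xi)\right)h_\eps(\xi)$. The point is that for $\xi = it$ with $t\in[0,\nu]$, the elementary inequality $\cosh(Rt)\leq e^{t\eps}\cosh((R-\eps)t)\leq e^{\nu\eps}\cosh((R-\eps)t)$ shows $H(it)\geq 0$ \emph{without} any large constant, because the positive $(R-\eps)$-shift term dominates the negative $R$-shift term pointwise on the imaginary axis. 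Since $g_\eps$ is supported in $(-\eps,\eps)$ and $R\geq\eps$, the three positive blocks in $G$ all vanish for $x\geq R$, leaving $G(x) = -g_\eps(x-R)/2\leq 0$ there, with $G(R) = -g_\eps(0)/2$. On the real line the multiplicative factor lies in $[0, 2(1+e^{\nu\eps})]$, giving the integral bound with the advertised $\eps$-dependent constant, and Proposition~\ref{prop:strategy} then yields \eqref{eq:suitable} after clearing the factor $2^{1-n}$ and the extra $1/2$.
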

\begin{proof}
Let $\nu := (n-1)/2$ and $R := \sys(M)$. We first check that
\[
G(x) :=\left(1+e^{\nu \eps}\right) g_\eps(x)+  e^{\nu \eps}\, \
\frac{g_\eps(x-R+\varepsilon)+g_\eps(x+R-\varepsilon)}{2} -\frac{g_\eps(x-R)+g_\eps(x+R)}{2}
\]
and $H(\xi) := \what{G}(\xi)$ form an admissible transform pair satisfying the hypotheses of \propref{prop:strategy}. 

If $f:\RR\to \CC$ is given by $f(x) = f_0(x-a)$ for some integrable function $f_0$ and some $a\in \RR$, then its Fourier transform satisfies $\widehat{f}(\xi) = e^{i\xi a} \widehat{f_0}(\xi)$. Together with Euler's formula $e^{iz} + e^{-iz} = 2 \cos z$, this implies that
\[
H(\xi) = \left(1+e^{\nu \eps}+e^{\nu \eps}\cos((R-\varepsilon)\xi) - \cos(R\xi)\right)h_\eps(\xi).
\]
We will use this kind of transformation rule without further mention in the sequel.

It is clear that $G$ is even and integrable since the same is true for $g_\eps$. The assumption on the support of $g_\eps$ and the hypothesis $R\geq \varepsilon$ further imply that $G(x) = -g_\eps(x-R)/2 \leq 0$ whenever $x \geq R$.  In particular, $G(R)=-g_\eps(0)/2$.

On the real line we have
\[
0 \leq1+e^{\nu \eps}+e^{\nu \eps}\cos((R-\varepsilon)\xi) - \cos(R\xi) \leq 2\left(1+e^{\nu \eps}\right)  
\]
so that $H(\xi) \geq 0$ and
\[
 \int_0^\infty H(r) \Phi_n(r)\, dr \leq 2\left(1+e^{\nu \eps}\right) \int_0^\infty h_\eps(\xi)  \Phi_n(r)\,dr.
\]
In fact, the factor $1+e^{\nu \eps}+e^{\nu \eps}\cos((R-\varepsilon)\xi) - \cos(R\xi)$ is bounded on any horizontal strip of bounded height, so that $H$ satisfies the decay condition \eqref{eq:decay} in addition to being holomorphic wherever $h_\eps$ is. This shows that $(G,H)$ is an admissible pair.

If $\xi = i t$ for some $t\in \left[0,\nu\right]$, then
\[
e^{\nu \eps}\cos((R-\varepsilon)\xi) = e^{\nu \eps} \cosh((R-\varepsilon) t) \geq \cosh(Rt) = \cos(R \xi).
\]
This implies that $H(\xi) \geq 0$ for every $\xi \in  i \left[-\nu, \nu\right]$. Applying \propref{prop:strategy} to the pair $(G,H)$ yields the desired inequality.
\end{proof}

It only remains to exhibit a pair $(g_\eps,h_\eps)$ satisfying the hypotheses of \propref{prop:suited}, which we do in the following lemma.

\begin{lem} \label{lem:convo}
Let $n \geq 2$ and $\eps>0,$ and let $g_\eps:\RR\to\RR$ be defined by
\[
g_\eps := 
\begin{cases} \left(\chi_{\left[\frac{-\eps}{n+2},\frac{\eps}{n+2}\right]}\right)^{\ast (n+2)} & \text{if }n\text{ is even} \\
 \left(\chi_{\left[\frac{-\eps}{n+1},\frac{\eps}{n+1}\right]}\right)^{\ast (n+1)} & \text{if }n\text{ is odd},
\end{cases}
\]
 where $f^{\ast j}$ denotes the $j$-th convolution of $f$ with itself and $\chi_{A}$ is the cha\-rac\-te\-ris\-tic function of the set $A$. Then $g_\eps$ and its Fourier transform $h_\eps$ satisfy the hypotheses of \propref{prop:suited}.
\end{lem}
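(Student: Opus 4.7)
The plan is to verify the four conditions—non-negativity and support of $g_\eps$, non-negativity of $h_\eps$ on $\RR\cup i[\tfrac{1-n}{2},\tfrac{n-1}{2}]$, and admissibility—by exploiting the fact that $g_\eps$ is a convolution of characteristic functions, so $h_\eps$ is an explicit product of sinc-type factors. The key design choice is that the number of convolutions $k\in\{n+1,n+2\}$ is always \emph{even} (to force non-negativity of $h_\eps$ on $\RR$) and always at least $n+1$ (to force the required polynomial decay of $h_\eps$).

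First I would extract the properties of $g_\eps$ from general convolution algebra. Each factor $\chi_{[-\eps/k,\eps/k]}$ is even, non-negative, bounded, integrable and supported in $[-\eps/k,\eps/k]$; all of these properties pass through $k$-fold convolution, and the support of the result is the Minkowski sum $[-\eps,\eps]$. Moreover the resulting $g_\eps$ is continuous, and the argument that $\chi_{[-\eps/k,\eps/k]}$ overlaps the $(k-1)$-fold convolution translated by $\pm\eps$ in only a single point shows that the endpoint values vanish, so $g_\eps(x)=0$ whenever $|x|\geq\eps$. This is precisely the first bullet of \propref{prop:suited}.

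Next I would write down the Fourier transform. From $\widehat{\chi_{[-a,a]}}(\xi)=2\sin(a\xi)/\xi$ and the rule $\widehat{f\ast g}=\widehat{f}\,\widehat{g}$,
\[
h_\eps(\xi)=\left(\frac{2\sin(\eps\xi/k)}{\xi}\right)^{\!k}.
\]
Since $\sin(z)/z$ is entire, so is $h_\eps$. For $\xi\in\RR$ the bracketed quantity is real, and $k$ being even gives $h_\eps(\xi)\geq 0$. For $\xi=it$ with $t\in\RR$, the identity $\sin(it)=i\sinh(t)$ turns the bracket into $2\sinh(\eps t/k)/t$, a positive real number extending to $2\eps/k$ at $t=0$, so $h_\eps(it)>0$ for all $t\in\RR$, which is stronger than the second hypothesis.

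Finally I would verify the decay condition. Writing $\xi=x+iy$, the pointwise bound
\[
|\sin(a\xi)|^{2}=\sin^{2}(ax)\cosh^{2}(ay)+\cos^{2}(ax)\sinh^{2}(ay)\leq \cosh^{2}(ay)
\]
shows that on any horizontal strip $|\im\xi|\leq T$ one has $|h_\eps(\xi)|\leq (2\cosh(\eps T/k))^{k}/|\xi|^{k}$ as $|\xi|\to\infty$. Because $k\geq n+1$ in both parities, this yields $|h_\eps(\xi)|=O(|\xi|^{-n-1})$ on every horizontal strip, which is the decay condition \eqref{eq:decay} with any $\eps'\in(0,1]$. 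The only real subtlety, more a piece of bookkeeping than a genuine obstacle, is that both positivity on $\RR$ and the decay need something from the exponent $k$—evenness for the first and size at least $n+1$ for the second—and it is precisely to arrange both simultaneously that the definition of $g_\eps$ is split according to the parity of $n$.
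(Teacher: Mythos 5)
Your proof is correct and follows essentially the same approach as the paper: both express $h_\eps$ as an even power of the sinc function $\widehat{\chi}_{[-a,a]}(\xi)=2\sin(a\xi)/\xi$, using the evenness of the exponent for non-negativity on $\RR\cup i\RR$ and the exponent being at least $n+1$ for the decay estimate on horizontal strips. You spell out a few details the paper leaves implicit (the explicit $\cosh$ bound for $|\sin|$ on a strip, and the vanishing of $g_\eps$ at $\pm\eps$), but the argument is the same.
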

\begin{proof}
Recall that the convolution of two integrable functions $\sigma$ and $\tau$ is defined by
\[
(\sigma \ast \tau)(x) := \int_{-\infty}^\infty \sigma(x-y)\tau(y)\,dy
\]
for any $x \in \RR$. It is easy to show that the essential supports of these functions satisfy
\[
\supp(\sigma \ast \tau) \subset \supp(\sigma)+\supp(\tau).
\] 
By induction, it follows that the support of $g_\eps$ is contained in $[-\eps,\eps]$. Moreover, $g_\eps$ is even, integrable and non-negative.

By the convolution theorem (see for instance \cite[Proposition 5.1.11]{SteinShakarchi}), the Fourier transform of $g_\eps$ satisfies
\[
h_\eps = \what{g_\eps}= 
\begin{cases} \left(\what\chi_{\left[\frac{-\eps}{n+2},\frac{\eps}{n+2}\right]}\right)^{n+2} & \text{if }n\text{ is even} \\
\left(\what\chi_{\left[\frac{-\eps}{n+1},\frac{\eps}{n+1}\right]}\right)^{n+1} & \text{if }n\text{ is odd}.
\end{cases}
\]
We chose the exponents in such a way that $h_\eps$ is an even power of a function which is real-valued in $\RR \cup i \RR$, making it non-negative there. Furthermore, $h_\eps$ is entire and satisfies the decay condition \eqref{eq:decay}. Indeed, for any $a>0$ we have
\[
\what\chi_{[-a,a]}(\xi) = \int_{-a}^a e^{-i\xi x}\,dx = \frac{e^{i a \xi} - e^{-i a \xi}}{i\xi} = \frac{2 \sin(a \xi)}{\xi} ,
\]
which has a removable singularity at the origin. Since the sine function is bounded on any horizontal strip of bounded height, we have $|h_\eps(\xi)|= O(|\xi|^{-n-1})$ in the strip \[\st{\xi \in \CC}{|\im \xi|< \frac{n-1}{2}+1}.\]
The pair $(g_\eps,h_\eps)$ is therefore admissible.
\end{proof}

We can now prove that the kissing number is bounded by a function of the volume and the systole.

\begin{proof}[Proof of \thmref{thm:main}] 
For manifolds $M$ with $\sys(M) \leq \eps_n :=4^{-(n+3)}< \sinh^{-1}(1)$, the theorem was proved in Section \ref{sec:thin}. As such, we may assume that $\sys(M) \geq \eps_n$ and apply \propref{prop:suited} to the pair $(g_{\eps_n},h_{\eps_n})$ from \lemref{lem:convo}. The theorem follows by setting
\[
A_n := \frac{2^{n+1}}{g_{\eps_n}(0)}\left(1+e^\frac{(n-1)\eps_n}{2}\right) \int_0^\infty h_{\eps_n}(\xi) \Phi_n(r) \,dr.
\]
\end{proof}

\begin{rem}\label{rem:cst}
For a closed orientable hyperbolic surface $M$, we do not need to rely on Lemma \ref{lem:det} since $N_\gamma^{1/2} D(\gamma)$ simplifies to $2 \sinh(\ell(\gamma)/2)$.
As such, we obtain the better inequality
\[
\frac{\sys(M)}{\sinh(\sys(M)/2) \vol(M)}\cdot \kiss(M) \leq \frac{2(1+e^{\eps/2})}{\pi g_\varepsilon(0)}  \int_0^\infty h_\eps(r) \cdot r \cdot \tanh(\pi r) dr =: C_{2,\varepsilon}',
\]
whenever $\sys(M) \geq \eps$. Evaluating this at $\eps=2\sinh^{-1}(1)$, we get a constant of 
\[C_{2,2\sinh^{-1}(1)}' =\frac{48 (1+e^{\sinh^{-1}(1)})}{\pi (\sinh^{-1}(1))^3} \int_0^\infty \frac{\sin(\sinh^{-1}(1) r/2)^4}{r^3} \tanh(r)dr = 10.1391 \ldots . \]
To obtain the constant $U \approx 63.71$ stated in the introduction, we multiply the above by $4\pi$ to replace area with genus and divide by $2$ to replace $\sinh(\sys(M)/2)$ with $\exp(\sys(M)/2)$.
\end{rem}

Next, we prove the corollary stating that the kissing number is a sub-quadratic function of the volume.

\begin{proof}[Proof of \corref{cor:volume}]
The proof is standard. It uses the fact that the volume of balls grows exponentially with the radius to get a logarithmic upper bound on the systole, which combined with \thmref{thm:main} gives what we want. The precise details are written below.

Let $r_n:= 1 / (n-1)$. This is chosen so that both 
\[
2(e^{(n-1)r} - 1) \geq e^{(n-1)r} \quad \text{whenever } r\geq r_n
\]
and the function $f(r):=e^{(n-1)r/2} / r$ is increasing for $r\geq 2r_n$.

\thmref{thm:main} and the discussion in \secref{sec:thin} imply that there is a constant $b_n>0$ such that $\kiss(M) \leq b_n \vol(M)$ whenever $\sys(M)\leq 2r_n$.  Since there is a lower bound $v_n>0$ for the volume of all closed hyperbolic $n$-manifolds \cite{KazhdanMargulis}, the ratio $\vol(M)/\log(1+\vol(M))$ is also bounded away from zero. Therefore, the inequality
\[
\kiss(M) \leq  b_n \vol(M) \leq c_n \, \frac{\vol(M)^2}{\log(1+\vol(M))}
\]
holds for some constant $c_n>0$ whenever $\sys(M)\leq 2r_n$.

Now assume that $\sys(M) > 2r_n$. The volume of a ball $B_r$ of radius $r>r_n$ in $\HH^n$ satisfies
\[
\vol(B_r) = \frac{2 \pi^{n/2}}{\Gamma(n/2)} \int_0^r [\sinh(x)]^{(n-1)}\,dx \geq \frac{2\pi^{n/2}}{\Gamma(n/2)} \cdot \frac{e^{(n-1)r} - 1}{(n-1)2^{n-1}} \geq \frac{\pi^{n/2}}{\Gamma(n/2)} \cdot \frac{e^{(n-1)r}}{(n-1)2^{n-1}}
\]
where $\Gamma$ is the classical gamma function. Here the first inequality is proved using the racetrack principle and the second inequality follows from the hypothesis $r>r_n$. Since any open ball of radius $\sys(M)/2$ in $M$ is embedded, we find that
\[
\vol(M) \geq \vol(B_{\sys(M) / 2}) > d_n  \, e^{(n-1)\sys(M)/2}
\]
for some constant $d_n>0$, which we may assume is less than the volume bound $v_n$. This is so that $\vol(M)/d_n \geq v_n/d_n > 1$, which implies that there exists a constant $a_n>0$ such that $(1+\vol(M))^{a_n} \leq \vol(M)/d_n$ for all $M$.  Indeed, a direct computation shows that $a_n = \log(v_n/d_n)/\log(1+v_n)$ will do. 

Since $2r_n < \sys(M) < \frac{2}{n-1}\log(\vol(M)/d_n)$ and the function $f$ is increasing in that range, we get an inequality of the form
\[ \frac{e^{(n-1)\sys(M)/2}}{\sys(M)} \leq \frac{\vol(M) / d_n}{\frac{2}{n-1}\log(\vol(M)/d_n)} \leq \frac{\vol(M)/d_n}{\frac{2 a_n}{n-1}\log(1+\vol(M))} . \]
So, filling in Theorem \ref{thm:main}, and combining all the constants into a single one we obtain
\[\kiss(M) \leq A_{n} \vol(M) \,\frac{e^{(n-1) \sys(M)/2}}{\sys(M)} \leq A_n''\frac{\vol(M)^2}{\log(1+\vol(M))}
\]
as required.
\end{proof}

\section{Length spectrum bounds}

In this section, we prove Theorem \ref{thm:unifpgt} and Corollary \ref{cor:unifpgt}. We treat the upper and lower bounds separately and start with the former.

\subsection{Upper bound}

We will prove the following upper bound:

\begin{prop} \label{prop:primupper}
For every $n\geq 2$ and $\delta>0$, there exists a constant $B_{n,\delta}>0$ such that for every closed hyperbolic $n$-manifold $M$ with $\sys(M)\geq  2\delta$, and every $L>0$, 
\[ \# \calP_{[L-\delta,L+\delta]}(M) \leq B_{n,\delta} \vol(M) \frac{e^{(n-1)L}}{L}.\]
\end{prop}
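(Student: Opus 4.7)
The plan is to apply the Selberg trace formula \eqref{eq:STF} to an admissible pair $(g_L,h_L)$ concentrated near length $L$, with $g_L\geq 0$ and $h_L\geq 0$ on the Laplace spectrum. Let $\phi := g_{2\delta}$ be the bump from \lemref{lem:convo}, so that $\phi$ is even, non-negative, supported in $[-2\delta, 2\delta]$, vanishes at the endpoints $\pm 2\delta$, and has Fourier transform $\widehat\phi \geq 0$ on $\RR \cup i\RR$ satisfying the decay \eqref{eq:decay}. Being a B-spline, $\phi$ is strictly positive on $(-2\delta, 2\delta)$, so $c_{n,\delta} := \min_{|x|\leq \delta}\phi(x) > 0$. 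Define
\[
g_L(x) := \phi(x-L) + 2\phi(x) + \phi(x+L), \qquad h_L(\xi) := \widehat{g_L}(\xi) = 4\cos^2(L\xi/2)\,\widehat\phi(\xi).
\]
Writing $\nu := (n-1)/2$, the pair $(g_L, h_L)$ is admissible and $h_L \geq 0$ on $\RR \cup i[-\nu, \nu]$: on $\RR$ both factors are non-negative, while on $i[-\nu, \nu]$ one has $\cos^2(Lit/2) = \cosh^2(Lt/2) \geq 0$ and $\widehat\phi(it) \geq 0$. The decay of $h_L$ inherits from that of $\widehat\phi$ since $\cos^2(L\xi/2)$ is entire and bounded on horizontal strips of bounded height.

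The support of $g_L$ lies in $[-L-2\delta, -L+2\delta] \cup [-2\delta, 2\delta] \cup [L-2\delta, L+2\delta]$. Under the hypotheses $\sys(M) \geq 2\delta$ and $L \geq 2\delta$, only the rightmost bump contributes to the geodesic side of \eqref{eq:STF}, because the middle bump vanishes at its only admissible value $\ell(\gamma) = 2\delta$ and the leftmost bump lies entirely in the negative reals. Every surviving term is non-negative and $\vol(M)\int h_L \Phi_n \geq 0$, so the trace formula gives
\[
\sum_{\gamma \in \calC(M)} \frac{\Lambda(\gamma)\,g_L(\ell(\gamma))}{N_\gamma^{(n-1)/2} D(\gamma)} \leq \sum_j h_L(r_j).
\]
Restricting the left-hand sum to primitive $\gamma$ with $\ell(\gamma) \in [L-\delta, L+\delta]$, for which $g_L(\ell(\gamma)) = \phi(\ell(\gamma)-L) \geq c_{n,\delta}$, $\Lambda(\gamma) = \ell(\gamma) \geq L-\delta$, $N_\gamma^{(n-1)/2} \leq e^{(n-1)(L+\delta)/2}$, and $D(\gamma) \leq 2^{n-1}$ by \lemref{lem:det}, produces
\[
\#\calP_{[L-\delta, L+\delta]}(M)\,\frac{c_{n,\delta}\,(L-\delta)}{2^{n-1}\,e^{(n-1)(L+\delta)/2}} \leq \sum_j h_L(r_j).
\]

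To bound the spectral sum, I would apply the Selberg trace formula a second time, now to the pair $(\phi, \widehat\phi)$. Since $\phi$ vanishes on $[2\delta, \infty) \supseteq [\sys(M), \infty)$, the entire geodesic side of this identity vanishes, yielding the clean equality
\[
\sum_j \widehat\phi(r_j) = \vol(M) \int_0^\infty \widehat\phi(r)\,\Phi_n(r)\,dr =: K_{n,\delta}\,\vol(M).
\]
All terms on the left are non-negative, so both the partial sum over real $r_j$ and the partial sum over $r_j \in i[0, \nu]$ are separately at most $K_{n,\delta}\vol(M)$. Using $\cos^2(Lr_j/2) \leq 1$ on $\RR$ and $\cosh^2(Lt_j/2) \leq \cosh^2(L\nu/2)$ on $i[0,\nu]$ gives
\[
\sum_j h_L(r_j) \leq 4 K_{n,\delta}\,\vol(M)\,\bigl(1 + \cosh^2(L\nu/2)\bigr) = O_{n,\delta}\bigl(\vol(M)\,e^{(n-1)L/2}\bigr).
\]
Combining with the previous paragraph and using $L - \delta \geq L/2$ for $L \geq 2\delta$ yields $\#\calP_{[L-\delta, L+\delta]}(M) \leq B_{n,\delta}\,\vol(M)\,e^{(n-1)L}/L$ in that range. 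For $0 < L < 2\delta$, the trivial bound $\#\calP_{[L-\delta, L+\delta]}(M) \leq \#\calP_{[0, 3\delta]}(M) = O_{n,\delta}(\vol(M))$ (coming from embedded ball packings of radius $\delta$ under $\sys(M) \geq 2\delta$), combined with the elementary inequality $e^{(n-1)L}/L \geq (n-1)e$ on $(0, \infty)$, closes the remaining range. The crucial obstacle is the uniform spectral bound on $\sum_j h_L(r_j)$; the slick point is that the systole hypothesis is just strong enough to make the auxiliary trace formula for $(\phi, \widehat\phi)$ collapse completely to its volume term, sidestepping any recourse to Weyl-type eigenvalue counting and letting the factor $\cosh^2(L\nu/2)$ produce exactly the $e^{(n-1)L/2}$ growth needed to match the $e^{(n-1)(L+\delta)/2}$ on the geodesic side.
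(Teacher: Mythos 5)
Your argument is correct, and it takes a genuinely different route from the paper's. The paper sets $G^L_\eps(x) = \tfrac12(g_\eps(x-L)+g_\eps(x+L)) - g_\eps(x)$, so that $H^L_\eps = (\cos(L\xi)-1)h_\eps$ is non-positive on $\RR$; this makes the integral term in \eqref{eq:STF} harmless but forces the spectral sum $\sum_j H^L_\eps(r_j)$ to be controlled by a count of the small eigenvalues (those in $[0,\nu^2]$), for which the paper invokes the results of Buser, Otal--Rosas, and Buser--Colbois--Dodziuk. Your choice $g_L = \phi(\cdot-L) + 2\phi + \phi(\cdot+L)$ with $h_L = 4\cos^2(L\xi/2)\widehat\phi$ keeps $h_L\geq 0$ on all of $\RR\cup i[-\nu,\nu]$, and you then bound $\sum_j h_L(r_j)$ by applying the trace formula a \emph{second} time to the bare pair $(\phi,\widehat\phi)$: the systole hypothesis makes the geodesic side of that second identity vanish exactly, giving $\sum_j\widehat\phi(r_j) = \vol(M)\int\widehat\phi\,\Phi_n$, after which the bounds $\cos^2\leq 1$ on $\RR$ and $\cosh^2(Lt/2)\leq\cosh^2(L\nu/2)$ on $i[0,\nu]$ close the loop. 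This elegantly sidesteps any recourse to small-eigenvalue counting, so your proof of Proposition~\ref{prop:primupper} is more self-contained than the paper's. (The factor $4(1+\cosh^2(L\nu/2))$ is a bit larger than the paper's, so the constant is marginally worse, but the growth rates match.)

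Two minor remarks. First, the handling of $0<L<2\delta$ via a ball-packing argument is informal as written; a cleaner way, once the case $L\geq 2\delta$ is established, is to note that $[L-\delta,L+\delta]\cap[\sys(M),\infty)\subseteq[2\delta,3\delta]\subseteq[L'-\delta,L'+\delta]$ with $L'=2\delta$, and to apply the already-proved bound at $L'$ together with the fact that $e^{(n-1)L}/L\geq(n-1)e$ for all $L>0$. Second, the paper's transformation rule for shifts (as stated in the proof of Proposition~\ref{prop:suited}) has the sign of the exponent flipped relative to the convention $h(\xi)=\int g(x)e^{-i\xi x}\,dx$, but this is immaterial here since only the symmetric combination producing $\cos$ is used; your formula $h_L = 4\cos^2(L\xi/2)\widehat\phi$ is correct.
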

\begin{proof}
For ease of notation, we write $\nu = (n-1)/2$. Let $\eps:=2\delta$ and let $(g_\eps,h_\eps)$ be the admissible transform pair provided by \lemref{lem:convo}. It is easy to show that $g_\eps$ is continuous and has support equal to $[-\eps,\eps]$. Furthermore, $h_\eps$ is non-negative in $\RR \cup i[-\nu,\nu]$ by construction.

Given $L>0$, consider the function defined by
\[G^L_\eps(x) := \frac{g_\eps(x-L)+g_\eps(x+L)}{2} -g_\eps(x)\]
for every $x \in \RR$. Then 
\[H^L_\eps(\xi):=\what{G^L_\eps}(\xi) = (\cos(L\xi)-1)h_\eps(\xi)\]
 is non-positive and bounded below by $-2 h_\eps$ in $\RR$, and the pair $(G^L_\eps,H^L_\eps)$ is admissible. 

Let $M$ be a closed hyperbolic $n$-manifold such that $\sys(M)\geq 2\delta = \eps$. Then $g_\eps(\ell(\gamma)) = 0$ for every closed geodesic $\gamma$ in $M$ so that $G^L_\eps$ is non-negative on the length spectrum of $M$. We also have $2^{1-n} \leq(1-N_\gamma^{-1})/D(\gamma)$ for every closed geodesic $\gamma$ by \lemref{lem:det}. Furthermore, if $\ell(\gamma)$ is in the interval $[L-\delta, L+ \delta]$ then
\[G^L_\eps(\ell(\gamma)) = \frac{g_\eps(\ell(\gamma)-L)+g_\eps(\ell(\gamma)+L)}{2} \geq \frac{g_\eps(\ell(\gamma)-L)}{2} \geq \frac{\mu}{2}\]
where $\mu>0$ is the minimum of $g_\eps$ on the interval $[-\delta,\delta]$.

Recall that $\calP_{[L-\delta,L+\delta]}(M)$ denotes the set of primitive closed oriented geodesics $\gamma$ in $M$ whose length $\ell(\gamma)= \Lambda(\gamma)$ is contained in the interval $[L-\delta, L+\delta]$. The inequalities from the previous paragraph combine to give
\begin{equation} \label{eq:prim1}
\mu 2^{-n} \sum_{\gamma \in\calP_{[L-\delta,L+\delta]}(M)} \frac{\ell(\gamma)}{e^{\nu \ell(\gamma)}} \leq \sum_{\gamma \in \calP(M)}  \frac{\Lambda(\gamma)}{{N_\gamma}^{\nu} D(\gamma)} G^L_\eps(\ell(\gamma)) \leq \sum_{\gamma \in \calC(M)}  \frac{\Lambda(\gamma)}{{N_\gamma}^{\nu} D(\gamma)} G^L_\eps(\ell(\gamma)).
\end{equation}
Observe that the function $x e^{-\nu x}$ has a unique local maximum at $x = 1/\nu$, where it takes the value $1/(\nu e) < 1$. Therefore, its minimum on any compact interval is attained at one of the endpoints. Since $\ell(\gamma) \geq 2\delta$ for every $\gamma \subset M$, we have
\begin{equation} \label{eq:prim2}
 \min\left( \frac{2\delta}{e^{2\nu \delta}}, \frac{L+\delta}{e^{\nu(L+\delta)}} \right)\leq \frac{\ell(\gamma)}{e^{\nu \ell(\gamma)}}
\end{equation}
for every $\gamma \in \calP_{[L-\delta,L+\delta]}(M)$. Since both terms in the minimum are less than $1$, their product is smaller than either term and we find
\[
\frac{\delta}{e^{3\nu \delta}} \cdot  \frac{L}{e^{\nu L}} \leq \frac{2\delta}{e^{2\nu \delta}} \cdot \frac{(L+\delta)}{e^{\nu(L+\delta)}} < \min\left( \frac{2\delta}{e^{2\nu \delta}}, \frac{L+\delta}{e^{\nu(L+\delta)}} \right)
\]
Together with inequalities \eqref{eq:prim1} and  \eqref{eq:prim2}, this yields
\[
\frac{\mu \delta}{2^n e^{3\nu \delta}} \cdot  \frac{L}{e^{\nu L}} \cdot \# \calP_{[L-\delta,L+\delta]}(M) \leq  \sum_{\gamma\in \calC(M)}  \frac{\Lambda(\gamma)}{{N_\gamma}^{\nu} D(\gamma)}  G^L_\eps(\ell(\gamma)).
\]

The Selberg trace formula states that
\[
\sum_{\gamma\in \calC(M)}  \frac{\Lambda(\gamma)}{{N_\gamma}^{\nu} D(\gamma)} G^L_\eps(\ell(\gamma)) = \sum_{j=0}^\infty H^L_\eps(r_j) -  \vol(M) \int_0^\infty H^L_\eps(r) \Phi_n(r)\, dr
\]
and we now proceed to bound the right-hand side from above. Let $K$ be the maximum of $h_\eps$ on $i[-\nu,\nu]$. Then \[H^L_\eps(it) \leq K(\cosh(Lt) - 1) \leq K e^{Lt} \leq K e^{\nu L}\] for every $t \in [0, \nu]$. In particular, we have $H^L_\eps(r_j) \leq  K e^{\nu L}$ for every eigenvalue $\lambda_j$ in the interval $[0, \nu^2]$ (such eigenvalues are called \emph{small}). It is known that there exists a constant $W_n>0$ such that the number of small eigenvalues does not exceed $W_n \vol(M)$ for any closed hyperbolic $n$-manifold $M$. This is due to Buser for $n=2$ \cite{BuserSmall} (see \cite{OtalRosas} for the sharp version) and $n=3$ \cite{BuserDim3}, and to Buser--Colbois--Dodziuk for $n \geq 4$  \cite[Theorem 3.6]{BCD}.  Since $H^L_\eps$ is non-positive in $\RR$, we obtain
\[
\sum_{j=0}^\infty H^L_\eps(r_j) \leq \sum_{r_j \notin \RR} H^L_\eps(r_j) = \sum_{\lambda_j \text{ small}} H^L_\eps(r_j) \leq W_n \vol(M) K e^{\nu L}.
\] 
Recall that $H^L_\eps \geq -2h_0$ in $\RR$ so that
\[
-  \vol(M) \int_0^\infty H^L_\eps(r) \Phi_n(r)\, dr \leq \vol(M)  \int_0^\infty 2 h_\eps(r) \Phi_n(r)\, dr.
\]
If we denote the last integral by $I$, we have shown that
\[
\frac{\mu \delta}{2^n e^{3\nu \delta}} \cdot  \frac{L}{e^{\nu L}} \cdot \#\calP_{[L-\delta,L+\delta]}(M) \leq \vol(M) \left( KW_n e^{\nu L}+ I \right) \leq  \vol(M) \left( KW_n+ I \right) e^{\nu L}.
\]
Upon rearranging, we obtain
\[
\# \calP_{[L-\delta,L+\delta]}(M) \leq B_{n,\delta} \vol(M) \frac{e^{2\nu L}}{L}
\]
where $B_{n,\delta} :=2^n e^{3\nu \delta}(KW_n+I) / (\mu \delta)$ depends on $n$ and $\delta$ but not on $M$.
\end{proof}

This leads to the following upper bound on the number of primitive closed geodesics of bounded length.

\begin{cor}\label{cor:primgeodupperbound}
For every $n\geq 2$ and $\delta>0$, there exists a constant $B_{n,\delta}'>0$ such that for every closed hyperbolic $n$-manifold $M$ with $\sys(M)\geq 2\delta$ and every $L>0$, we have
\begin{equation} \label{eq:PNTcor}
\#\calP_{[0,L]}(M) \leq B_{n,\delta}'  \vol(M) \frac{e^{(n-1)L}}{L}.
\end{equation}
\end{cor}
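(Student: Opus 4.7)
The plan is to deduce \corref{cor:primgeodupperbound} from \propref{prop:primupper} by covering $[0,L]$ with $O(L/\delta)$ intervals of width $2\delta$ and summing the resulting upper bounds. The case $L < 2\delta$ is trivial, since $\#\calP_{[0,L]}(M) = 0$ when $\sys(M) \geq 2\delta$. For $L \geq 2\delta$, I would set $N := \lceil L/(2\delta) \rceil$ and $L_k := (2k-1)\delta$ for $k = 1,\ldots,N$, so that the intervals $[L_k - \delta,\, L_k + \delta]$ cover $[0, 2N\delta] \supseteq [0,L]$ and $L_N \leq L + \delta$. Applying \propref{prop:primupper} to each of these intervals and summing then yields
\[
\#\calP_{[0,L]}(M) \;\leq\; \sum_{k=1}^N \#\calP_{[L_k - \delta,\, L_k + \delta]}(M) \;\leq\; B_{n,\delta}\,\vol(M) \sum_{k=1}^N \frac{e^{(n-1) L_k}}{L_k}.
\]

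The heart of the argument is then to show that the sum on the right is bounded by a constant, depending only on $n$ and $\delta$, times $e^{(n-1)L}/L$. I would split the sum according to whether $L_k \geq L/2$ or $L_k < L/2$. For the indices with $L_k \geq L/2$, I would use $1/L_k \leq 2/L$ and sum $e^{(n-1)L_k}$ as a geometric series backward from $k = N$ with common ratio $e^{-2(n-1)\delta}$, producing a contribution of the order $e^{(n-1)(L+\delta)}/\bigl(L(1-e^{-2(n-1)\delta})\bigr)$. For the indices with $L_k < L/2$, each summand is at most $e^{(n-1)L/2}/\delta$ and there are at most $N \leq L/(2\delta) + 1$ of them, giving a total contribution of order $L\, e^{(n-1)L/2}$; this is dominated by $e^{(n-1)L}/L$ on the range $L \in [2\delta,\infty)$ because $L^2\, e^{-(n-1)L/2}$ is a bounded function there.

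Combining these two estimates produces an explicit value of $B_{n,\delta}'$ in terms of $B_{n,\delta}$, $n$, and $\delta$. I expect no serious obstacle in carrying this out; the only real subtlety is the book-keeping needed to recover the factor of $1/L$ in the conclusion, since a naive geometric-series bound on the whole sum would yield only $e^{(n-1)L}$. This is what forces the split between indices where $L_k$ is comparable to $L$ (which dominate and supply the $1/L$ via the term $1/L_k$) and indices where $L_k$ is much smaller (which are uniformly exponentially suppressed).
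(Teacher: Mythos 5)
Your proposal is correct and follows a genuinely different route from the paper. You cover $[0,L]$ by a discrete family of $N = \lceil L/(2\delta)\rceil$ intervals of width $2\delta$, apply \propref{prop:primupper} to each, and bound the resulting sum $\sum_k e^{(n-1)L_k}/L_k$ directly by splitting it at $L_k = L/2$: the upper range is handled by a backward geometric series (with ratio $e^{-2(n-1)\delta}$) together with the crude estimate $1/L_k \leq 2/L$, and the lower range contributes at most $N \cdot e^{(n-1)L/2}/\delta$, which is swamped by $e^{(n-1)L)}/L$ because $L^2 e^{-(n-1)L/2}$ is bounded on $[2\delta,\infty)$. The checks go through, and the split you identify is indeed the crucial step that recovers the $1/L$ factor. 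The paper instead integrates: it first handles the finite range $[0,3]$ by a bounded number of applications of \propref{prop:primupper}, and for $L > 3$ uses the Riemann-sum inequality $\delta\cdot\#\calP_{[3,L]}(M) \leq \int_3^L \#\calP_{[x-\delta,x+\delta]}(M)\,dx$ (each geodesic with length in $[3,L]$ contributes to the integrand over a range of length at least $\delta$), after which the change of variable $u = e^{(n-1)x}$ expresses the right side in terms of the logarithmic integral $\li$ and a standard bound $\li(x) \leq x/\log x + x/(\log x)^2 + 3x/(\log x)^3$ delivers the $1/L$. Your argument is more elementary and self-contained (no $\li$ estimate needed), at the cost of a more explicit case split and a slightly larger implied constant; the paper's integral trick packages all the geometric-series book-keeping into a known asymptotic for $\li$. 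Both are valid; you may as well keep yours, perhaps noting that the trivial case $L < 2\delta$ could also be absorbed by observing, as the paper does, that $e^{(n-1)L}/L \geq (n-1)e$ for all $L>0$, which makes the bound automatic on any bounded range of $L$ once you know $\#\calP_{[0,L_0]}(M) \leq \mathrm{const}\cdot\vol(M)$ for a fixed $L_0$.
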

\begin{proof}
We split the count into two parts. By subdividing the interval $[0,3]$ into $\lceil 3/(2\delta) \rceil$ subintervals of equal length $\leq 2\delta$ and applying \propref{prop:primupper} to each of these, we get some constant $F_{n,\delta}$ such that 
\[\#\calP_{[0,3]}(M) \leq F_{n,\delta} \vol(M).\]
Since $e^{(n-1)x} /x$ is bounded below by $(n-1)e$ for all $x>0$, inequality \eqref{eq:PNTcor} is true if $L \leq 3$.

If $L >3$, we also estimate
\[
\delta\cdot \#\calP_{[3,L]}(M) \leq \int_3^{L} \#\calP_{[x-\delta,x+\delta]}(M) \,dx \leq B_{n,\delta} \vol(M)  \int_3^{L} \frac{e^{(n-1)x}}{x} \,dx
\]
since any primitive closed geodesic $\gamma$ with length in the interval $[3,L]$ contributes to\linebreak $\#\calP_{[x-\delta,x+\delta]}(M)$ over an interval of length at least $\delta$.  The last inequality in the above comes from \propref{prop:primupper}.

After the change of variable $u = e^{(n-1)x}$, we get the logarithmic integral function
\[
 \int_3^{L} \frac{e^{(n-1)x}}{x} \,dx = \int_{\exp(3(n-1))}^{\exp((n-1)L)} \frac{du}{\log u} = \li\left( e^{(n-1)L}\right) - \li\left( e^{3(n-1)}\right) \leq \li\left( e^{(n-1)L}\right).
\]
Since $e^{3(n-1)} \geq 11$ for all $n\geq 2$ and 
\[ \li(x) \leq  \frac{x}{\log (x)} + \frac{x}{\log (x)^2} + \frac{3x}{\log (x)^3},  \]
for all $x\geq 11$, we find that $\#\calP_{[3,L]}(M)$ satisfies an inequality of the form \eqref{eq:PNTcor}. Adding the contribution of $\#\calP_{[3,L]}(M)$ cause no harm according to the first paragraph.
\end{proof}

\subsection{Lower bound}

The lower bound on the number of primitive closed geodesics with length in a small interval takes the following form:
\begin{prop}\label{prop:primlower}
For every $n\geq 2$ and $\delta>0$, there exist positive constants $C_{n,\delta}$ and $D_{n,\delta}$ such that for every closed hyperbolic $n$-manifold $M$ with $\sys(M) \geq \delta$ and every $L>0$, 
\[
\#\calP_{[L-\delta,L+\delta]}(M) \geq  C_{n,\delta} \frac{e^{(n-1)L}}{L} - D_{n,\delta} \vol(M) \frac{e^{\frac{(n-1)}{2} L}}{L}.
\]
\end{prop}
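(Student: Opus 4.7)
The plan is to apply the Selberg trace formula to an admissible pair $(G,H)$ whose sign conditions are \emph{dual} to those used in the proof of \propref{prop:primupper}: now I want $G \geq 0$ everywhere (so every geometric summand is non-negative and the count $\#\calC_{[L-\delta, L+\delta]}(M)$ controls the geometric sum from above through a simple explicit weight), while $H \geq 0$ on $\RR \cup i[-\nu, \nu]$ with $\nu := (n-1)/2$ (so that the spectral side is bounded from below by the single term $H(i\nu)$, which can be arranged to grow like $e^{\nu L}$). Letting $(g_\delta, h_\delta)$ be the admissible pair of \lemref{lem:convo}, the natural candidate is
\[
G(x) := \frac{g_\delta(x-L)+g_\delta(x+L)}{2} + g_\delta(x), \qquad H(\xi) = (1+\cos(L\xi))\, h_\delta(\xi).
\]
The sign conditions follow from $g_\delta, h_\delta \geq 0$ on $\RR\cup i\RR$ combined with $1+\cos \geq 0$ on $\RR$ and $1+\cosh(Lt) \geq 0$ on $i[-\nu,\nu]$. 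The extra summand $g_\delta(x)$ is there purely to guarantee the pointwise positivity of $H$: since $g_\delta$ is a B-spline supported in $[-\delta, \delta]$ that vanishes at the endpoints, the hypothesis $\sys(M) \geq \delta$ forces $g_\delta(\ell(\gamma))=0$ for every $\gamma \in \calC(M)$, so this term contributes nothing on the geometric side.

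Assume $L \geq 2\delta$ (the remaining bounded range of $L$ makes the stated inequality vacuous once the constants are chosen appropriately). The geometric sum then collapses to
\[
\sum_{\gamma \in \calC_{[L-\delta, L+\delta]}(M)} \frac{\Lambda(\gamma)}{N_\gamma^{\nu}\, D(\gamma)}\cdot \frac{g_\delta(\ell(\gamma)-L)}{2},
\]
and \lemref{lem:det} bounds each weight above by a constant (depending only on $n$ and $\delta$) times $L\, e^{-\nu L}$. On the spectral side, positivity of $H$ on $\{r_j\}$ yields
\[
\sum_{j \geq 0} H(r_j) \geq H(i\nu) = \bigl(1+\cosh(L\nu)\bigr)\, h_\delta(i\nu) \geq \tfrac{1}{2}\, h_\delta(i\nu)\, e^{\nu L},
\]
where $h_\delta(i\nu)>0$ depends only on $n$ and $\delta$, while the volume term is at most $2\, \vol(M) \int_0^\infty h_\delta(r)\, \Phi_n(r)\,dr$, a constant times $\vol(M)$. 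Rearranging the trace formula then delivers a lower bound on $\#\calC_{[L-\delta,L+\delta]}(M)$ of the shape $C'\, e^{(n-1)L}/L - D'\, \vol(M)\, e^{(n-1)L/2}/L$.

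The final step is to pass from the total count $\#\calC$ to the primitive count $\#\calP$. A non-primitive geodesic $\gamma = \gamma_0^m$ lying in $\calC_{[L-\delta,L+\delta]}(M)$ corresponds to a primitive $\gamma_0 \in \calP_{[(L-\delta)/m,(L+\delta)/m]}(M)$ with $2 \leq m \leq (L+\delta)/\delta$, so I would bound the total non-primitive contribution by applying \corref{cor:primgeodupperbound} with parameter $\delta/2$ in place of $\delta$ (legitimate since $\sys(M) \geq \delta = 2\cdot(\delta/2)$). The $m=2$ term contributes $O(\vol(M)\, e^{\nu L}/L)$, which can be absorbed into $D_{n,\delta}$, while the sum over $m\geq 3$ involves exponentials of order at most $e^{2\nu L/3}$ and is negligible. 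I expect this non-primitive bookkeeping to be the main technical obstacle: one must verify that the $m=2$ correction grows only at the volume-error rate $\vol(M)\, e^{\nu L}/L$ rather than at the main-term rate $e^{(n-1)L}/L$, which is exactly what the already-established upper bound \corref{cor:primgeodupperbound} guarantees.
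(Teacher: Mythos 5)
Your proposal is correct and takes essentially the same route as the paper: same auxiliary pair $G^L_\delta(x) = g_\delta(x) + \tfrac12(g_\delta(x-L)+g_\delta(x+L))$, same sign arrangement, same use of the $\lambda_0=0$ term $H(i\nu) \geq \tfrac12 h_\delta(i\nu)e^{\nu L}$ to generate the main term, same bound of the volume integral by $2\vol(M)\int_0^\infty h_\delta\Phi_n$, and same use of $D(\gamma)\geq(1-N_\gamma^{-1})^{n-1}$ with $(2\sinh(\ell/2))^{n-1}\geq(1-e^{-\delta})^{n-1}e^{\nu\ell}$ to get a weight of order $L e^{-\nu L}$. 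The one place you deviate slightly is the non-primitive bookkeeping: you propose to sum over powers $m\geq 2$ and invoke \propref{prop:primupper} repeatedly, whereas the paper observes more directly that every non-primitive $\gamma=\gamma_0^m$ in $\calC_{[L-\delta,L+\delta]}$ has $\Lambda(\gamma)\leq(L+\delta)/2$, and a given $\gamma_0$ contributes at most boundedly many powers because $\sys(M)\geq\delta$ forces consecutive powers to differ in length by at least $\delta$; this reduces the whole correction to $O(1)\cdot\#\calP_{[0,(L+\delta)/2]}(M)\leq O(\vol(M)e^{\nu L}/L)$ via \corref{cor:primgeodupperbound} in one step. Your $m$-by-$m$ decomposition works too (the $m\geq 3$ tail is $O(Le^{2\nu L/3})=o(e^{\nu L})$ as you say, and the range $m\leq(L+\delta)/\delta$ keeps it finite), but the paper's version is a little cleaner since it avoids any discussion of the tail. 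Either way, the estimate lands in the correction term $D_{n,\delta}\vol(M)e^{\nu L}/L$ as required.
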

\begin{proof}
As before, set $\nu := (n-1)/2$. Let $(g_\delta,h_\delta)$ be the admissible transform pair given by \lemref{lem:convo} such that $g_\delta$ has support in $[-\delta,\delta]$. One can show that $g_\delta$ is non-increasing in $[0,\infty)$ by induction on the number of convolutions. In particular, it attains its maximum at the origin. In any case, what matters is that $g_\delta$ is bounded.

For $L>0$, consider the admissible transform pair given by 
\[G^L_\delta(x) = g_\delta(x) + \frac{g_\delta(x-L)+g_\delta(x+L)}{2} \quad \text{ and }\quad H^L_\delta(\xi) = (1+\cos(L\xi))h_\delta(\xi).\] 
Then $G^L_\delta$ is non-negative in $\RR$ and bounded above by $2g_\delta(0)$. Moreover, $H^L_\delta$ is non-negative in $\RR \cup i \left[-\nu, \nu\right]$ and bounded above by $2 h_\delta$ on the real line.

Set $I:= \int_0^\infty 2 h_\delta(r)\Phi_n(r)\,dr$ and let $M$ be any closed hyperbolic $n$-manifold such that $\sys(M) \geq \delta$. Recall that the root corresponding to the eigenvalue $\lambda_0=0$ is $r_0 = i \nu$. From the Selberg trace formula and the properties of $H^L_\delta$, we obtain 
\begin{align*}
(1+\cosh(L\nu)) h_\delta(i\nu)- \vol(M) I &= H^L_\delta(i\nu)- \vol(M) \int_0^\infty 2 h_\delta(r)\Phi_n(r)\,dr \\
&\leq  \sum_{j=0}^\infty H^L_\delta(r_j) -  \vol(M) \int_0^\infty H^L_\delta(r) \Phi_n(r)\, dr \\
&= \sum_{\gamma\in \calC(M)} \frac{\Lambda(\gamma)}{{N_\gamma}^{\nu}D(\gamma)}  G^L_\delta(\ell(\gamma)).
\end{align*}

The hypothesis that $\sys(M) \geq \delta$ implies that $G^L_\delta(\ell(\gamma)) = g_\delta(\ell(\gamma)-L)/2$ for every closed geodesic $\gamma$ in $M$ since $g_\delta$ vanishes outside $(-\delta,\delta)$. In particular, the summands in the above sum vanish unless $\ell(\gamma) \in [L-\delta, L + \delta]$. Furthermore, we have $G^L_\delta(\ell(\gamma)) \leq g_\delta(0)/2$ for every $\gamma \in \calC(M)$. We then use the inequality $(1-N_\gamma^{-1})^{2\nu} \leq D(\gamma)$ from \lemref{lem:det}, the trivial bound $\Lambda(\gamma)\leq \ell(\gamma)$ and the identity
\[
\left(1-N_\gamma^{-1}\right)^{2\nu} N_\gamma^{\nu} = \left(N_\gamma^{\frac12}-N_\gamma^{-\frac12}\right)^{2\nu} = \left(2 \sinh(\ell(\gamma)/2) \right)^{2\nu}
\]
to obtain
\[
\sum_{\gamma \in \calC(M)} \frac{\Lambda(\gamma)}{{N_\gamma}^{\nu}D(\gamma)} G^L_\delta(\ell(\gamma)) \leq \frac{g_\delta(0)}{2} \sum_{\gamma \in \calC_{[L-\delta,L+\delta]}(M)} \frac{\ell(\gamma)}{(2\sinh(\ell(\gamma)/2))^{2\nu}},
\]
where $\calC_J(M)$ is the set of closed oriented geodesics in $M$ with length in the interval $J$.

On the interval $[\delta, \infty)$, the function $2\sinh(x/2)e^{-x/2} = 1 - e^{-x}$ is bouded below by $1-e^{-\delta}$ so that 
\[
(1-e^{-\delta})^{2\nu} e^{\nu\ell(\gamma)} \leq (2\sinh(\ell(\gamma)/2))^{2\nu}
\]
for every $\gamma \in \calC(M)$. We therefore have
\[
 \frac{g_0(0)}{2} \sum_{\gamma \in \calC_{[L-\delta,L+\delta]}(M)} \frac{\ell(\gamma)}{(2\sinh(\ell(\gamma)/2))^{2\nu}} \leq  \frac{g_0(0)}{2(1-e^{-\eps})^{2\nu}} \sum_{\gamma \in \calC_{[L-\delta,L+\delta]}(M)} \frac{\ell(\gamma)}{e^{\nu\ell(\gamma)}}
\]

Assume that $L \geq \delta$. Then for every $x \in [L-\delta,L+\delta]$ we have
\[
x \leq 2L \leq 2 L e^{\nu(x-L+\delta)} \quad \text{and hence} \quad \frac{x}{e^{\nu x}} \leq 2e^{\nu\delta} \frac{L}{e^{\nu L}}
\]
which gives
\[
\frac{g_0(0)}{2(1-e^{-\delta})^{2\nu}} \sum_{\gamma \in \calC_{[L-\delta,L+\delta]}(M)} \frac{\ell(\gamma)}{e^{\nu\ell(\gamma)}} \leq \frac{g_0(0) e^{\nu\delta} L}{(1-e^{-\delta})^{2\nu} e^{\nu L}} \cdot  \# \calC_{[L-\delta,L+\delta]}(M) .
\]

All in all, we have shown that
\begin{equation} \label{eq:PNTlower}
\#\calC_{[L-\delta,L+\delta]}(M) \geq \frac{(1-e^{-\delta})^{2\nu}}{g_0(0) e^{\nu\delta}}\left[ \frac{h_0(i\nu)}{2} e^{\nu L} - \vol(M) I \right] \frac{e^{\nu L}}{L}.
\end{equation}

We next need to subtract the contribution from non-primitive curves. Since we assume $\sys(M)\geq \delta$, any primitive geodesic $\gamma_0$ has at most two positive powers $\gamma_0^m$ whose length lands in the interval $[L-\delta, L+\delta]$. Moreover, the non-primitive geodesics $\gamma$ whose length land in that interval must satisfy $\Lambda(\gamma)\leq (L+\delta)/2$. Thus the number of non-primitive geodesics such that $\ell(\gamma)\in [L-\delta, L+\delta]$ is at most 
\[
2\cdot\# \calP_{[0, (L+\delta)/2]}(M) \leq 4 B_{n,\delta}'  \vol(M) \frac{e^{(n-1)(L+\delta)/2}}{L+\delta} \leq 4 e^{\nu \delta} B_{n,\delta}'  \vol(M) \frac{e^{\nu L}}{L}
\]
by Corollary \ref{cor:primgeodupperbound}. Subtracting this from \eqref{eq:PNTlower} yields the lower bound for $\#\calP_{[L-\delta,L+\delta]}(M)$.
\end{proof}

Finally, this implies a lower bound on the number of primitive closed geodesics of bounded length.
\begin{cor} \label{cor:prim_lower}
For every $n\geq 2$ and $\delta>0$, there exist constants $C_{n,\delta}',D_{n,\delta}'>0$ such that for every closed hyperbolic $n$-manifold $M$ with $\sys(M)\geq 2\delta$ and every $L>0$,
 \[
 \#\calP_{[0,L]}(M) \geq C_{n,\delta}' \frac{e^{(n-1)L}}{L} - D_{n,\delta}' \vol(M) \frac{e^{\frac{(n-1)}{2} L}}{L}.
\]
\end{cor}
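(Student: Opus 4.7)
The plan is to derive this corollary from \propref{prop:primlower} by a short inclusion-and-constant-adjustment argument; the substantive spectral work is already done. Since primitive closed geodesics have positive length, the trivial inclusion $\calP_{[L-2\delta,L]}(M) \subseteq \calP_{[0,L]}(M)$ reduces everything to bounding the count on the short interval $[L-2\delta,L]$.

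First, I would treat the range $L \geq 2\delta$ by applying \propref{prop:primlower} with $L$ replaced by $L' := L - \delta \geq \delta$, which gives
\[
\#\calP_{[L-2\delta,L]}(M) \geq C_{n,\delta}\frac{e^{(n-1)(L-\delta)}}{L-\delta} - D_{n,\delta}\vol(M)\frac{e^{(n-1)(L-\delta)/2}}{L-\delta}.
\]
From there I would just rewrite the right-hand side in terms of $e^{(n-1)L}/L$ and $\vol(M) e^{(n-1)L/2}/L$, using $1/(L-\delta) \geq 1/L$ for the positive term (picking up a factor $e^{-(n-1)\delta}$) and $1/(L-\delta) \leq 2/L$ together with $e^{-(n-1)\delta/2} \leq 1$ for the negative term (picking up a harmless factor $2$). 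This yields the bound on $L \geq 2\delta$ with constants $C_{n,\delta}' := e^{-(n-1)\delta} C_{n,\delta}$ and $2 D_{n,\delta}$.

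For the remaining range $L \in (0,2\delta)$, I would invoke the Kazhdan--Margulis universal lower bound $\vol(M) \geq v_n$ \cite{KazhdanMargulis} to arrange that the right-hand side of the claimed inequality is non-positive, and hence vacuous. On this interval the ratio of the positive to the negative term equals $C_{n,\delta}' e^{(n-1)L/2}/(D_{n,\delta}' \vol(M))$, which is at most $C_{n,\delta}' e^{(n-1)\delta}/(D_{n,\delta}' v_n)$; enlarging $D_{n,\delta}'$ if necessary makes this ratio at most $1$. I do not foresee any genuine obstacle: all the delicate use of the trace formula is packaged into \propref{prop:primlower}, and the only point requiring care is that the proposition is informative only when $L' = L - \delta \geq \delta$, which is what forces the split at $L = 2\delta$.
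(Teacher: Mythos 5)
Your proof is correct, but it takes a genuinely different and more economical route than the paper's. The paper parallels its proof of the upper bound Corollary~\ref{cor:primgeodupperbound}: it bounds $\#\calP_{[6,L]}(M)$ from below by $\frac{1}{2\delta}\int_6^L \#\calP_{[x-\delta,x+\delta]}(M)\,dx$, feeds Proposition~\ref{prop:primlower} into the integrand, and then evaluates the resulting integrals via the logarithmic integral and the two-sided estimate $\frac{x}{\log x}+\frac{x}{(\log x)^2}\leq \li(x)\leq \frac{x}{\log x}+\frac{x}{(\log x)^2}+\frac{3x}{(\log x)^3}$. You instead observe that for a \emph{lower} bound one never needs to aggregate over the whole interval: because the target right-hand side grows like $e^{(n-1)L}/L$, a single window of width $2\delta$ at the top of $[0,L]$ already carries the correct order of magnitude, so the inclusion $\calP_{[L-2\delta,L]}(M)\subseteq \calP_{[0,L]}(M)$ plus Proposition~\ref{prop:primlower} applied at $L'=L-\delta$ suffices after elementary constant-juggling. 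Your split at $L=2\delta$ is the right one — it is exactly where the proposition's proof (which assumes $L'\geq\delta$) applies — and the Kazhdan--Margulis volume floor correctly makes the small-$L$ range vacuous once $D_{n,\delta}'$ is taken large, without damaging the large-$L$ case since $D_{n,\delta}'$ enters with a minus sign. The trade-off is that the paper's integration method would produce constants closer to the sharp Prime Geodesic Theorem constant $\frac{1}{n-1}$ if one cared to optimize, whereas your single-window version loses roughly a factor comparable to the number of windows; since the corollary only claims existence of \emph{some} constants, both proofs establish the stated result, and yours does so with less machinery. Note also that this shortcut is specific to the lower bound — the analogous single-window idea cannot replace the integration in the proof of the upper bound Corollary~\ref{cor:primgeodupperbound}, where every window genuinely contributes.
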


\begin{proof}
The proof is very similar to that of Corollary \ref{cor:primgeodupperbound}. Again, we split the count into two parts. Since there exists a uniform lower bound on the volume of a closed hyperbolic $n$-manifold for every dimension $n$, we can ignore the geodesics with length in $[0,6]$, at the cost of enlarging our constant $D_{n,\delta}'$.

Using \propref{prop:primlower}, we obtain
\begin{multline*}
2\delta \cdot \#\calP_{[6,L]}(M) \geq \int_6^{L} \#\calP_{[x-\delta,x+\delta]}(M) \,dx\\
 \geq C_{n,\delta} \int_6^{L} \frac{e^{(n-1)x}}{x} \,dx - D_{n,\delta} \vol(M)  \int_6^{L} \frac{e^{(n-1)x/2}}{x} \,dx 
\end{multline*}
because any primitive geodesic $\gamma$ with length in $[6,L]$ contributes to $\#\calP_{[x-\delta,x+\delta]}(M)$ over an interval of length at most $2\delta$.

We have
\[
 \int_6^{L} \frac{e^{(n-1)x}}{x} \,dx = \li\left( e^{(n-1)L}\right) - \li\left( e^{6(n-1)}\right) \]
 and
 \[ \int_6^{L} \frac{e^{(n-1)x/2}}{x} \,dx = \li\left( e^{(n-1)L/2}\right) - \li\left( e^{3(n-1)}\right) .
\]
Since 
\[ \frac{x}{\log (x)} + \frac{x}{\log (x)^2} \leq \li(x) \leq  \frac{x}{\log (x)} + \frac{x}{\log (x)^2} + \frac{3x}{\log (x)^3},  \]
for all $x\geq 11$, the result follows.
\end{proof}

\thmref{thm:unifpgt} combines Propositions \ref{prop:primupper} and \ref{prop:primlower} into a single statement, while \corref{cor:unifpgt} combines the Corollaries \ref{cor:primgeodupperbound} and \ref{cor:prim_lower}.

\bibliography{biblio}
\bibliographystyle{alpha}
\end{document}